\documentclass[a4paper,reqno,11pt]{amsart}
\usepackage[utf8]{inputenc}	
\usepackage{indentfirst} %Per avere il rientro nella prima riga di ogni capitolo.
\usepackage{amsmath,amsfonts,amssymb,,mathrsfs}
\usepackage{bbm}
\usepackage[colorlinks=true]{hyperref}
\usepackage{enumerate}
\usepackage{graphicx}
\usepackage{xcolor}
\usepackage{stackengine}
\usepackage[a4paper, top=2.5cm, right=2.5cm, left=2.5cm, bottom=2.5cm]{geometry}
\usepackage{esint}
\usepackage[shortlabels]{enumitem}

\allowdisplaybreaks

\theoremstyle{definition}
\newtheorem{definition}{Definition}[section]

\theoremstyle{remark}
\newtheorem{remark}[definition]{Remark}

\theoremstyle{plain}
\newtheorem{theorem}[definition]{Theorem}

\theoremstyle{plain}
\newtheorem{lemma}[definition]{Lemma}
\theoremstyle{remark}

\theoremstyle{plain}

\theoremstyle{plain}
\newtheorem{proposition}[definition]{Proposition}
\theoremstyle{plain}

\theoremstyle{plain}

\numberwithin{equation}{section}
\allowdisplaybreaks

\def\XXint#1#2#3{{\setbox0=\hbox{$#1{#2#3}{\int}$}
      \vcenter{\hbox{$#2#3$}}\kern-.5\wd0}}

\definecolor{ao}{rgb}{0.0, 0.5, 0.0}

\newcommand{\Om}{\Omega}
\newcommand{\R}{\mathbb{R}}

\newcommand{\di}{\mathrm{d}}

\newcommand{\de}{\textnormal{d}}

\newcommand{\N}{\mathbb{N}}
\newcommand{\Q}{\mathbb{Q}}
\newcommand{\Z}{\mathbb{Z}}

\def\namedlabel#1#2{\begingroup
    #2%
    \def\@currentlabel{#2}%
    \phantomsection\label{#1}\endgroup
}

\makeatletter
\@namedef{subjclassname@2020}{%
  \textup{2020} Mathematics Subject Classification}
\makeatother

\title[Symmetric total variation on point clouds]{Approximation of symmetric total variation\\ on point clouds}

\author[S. Almi]{S. Almi}
\address[Stefano Almi]{Department of Mathematics and Applications ``R.~Caccioppoli'', University of Naples Federico II, Via Cintia, Monte S. Angelo, 80126 Napoli, Italy.}
\email{stefano.almi@unina.it}
\author[A. Kubin]{A. Kubin}
\address[Anna Kubin]{Institute of Analysis and Scientific Computing, TU Wien, Wiedner Hauptstr.~8-10, 1040 Vienna, Austria.}
\email{anna.kubin@tuwien.ac.at}
\author[E. Tasso]{E. Tasso}
\address[Emanuele Tasso]{Institute of Analysis and Scientific Computing, TU Wien, Wiedner Hauptstr.~8-10, 1040 Vienna, Austria.}
\email{emanuele.tasso@tuwien.ac.at}

\begin{document}

\begin{abstract}
	   The paper investigates the approximation of the symmetric Total Variation functional on graphs. Such an approximation is given in terms of a discrete and symmetric finite difference model defined on point clouds obtained by randomly sampling a reference probability measure. We identify suitable scalings of the point distribution that guarantee an almost surely $\Gamma$-convergence to an anisotropic weighted symmetric Total Variation.

		\medskip
  
		\noindent
		{\it 2020 Mathematics Subject Classification: 49Q20, %Variational problems in a geometric measure-theoretic setting
                                                          49J45, %Methods involving semicontinuity and convergence; relaxation
                                                          26B30  %Absolutely continuous real functions of several variables, functions of bounded variation
                                                          }

		\smallskip
		\noindent
		{\it Keywords and phrases:}
Free-discontinuity problems, graph approximations, symmetric gradient, $\Gamma$-convergence, transport maps.
\end{abstract}

 \maketitle

\section{Introduction}

The analysis of variational problems defined on random data has become increasingly significant in a wide range of applications, including machine learning, imaging, and materials science \cite{Bressonetal, GarciaSellesetal, Rangapuram, Shi, Sunetal}.
To a point cloud, one can naturally associate a weighted graph:  the sampled points form the vertices of the graph, and edges are introduced between pairs of points that are sufficiently close to each other. These interactions are encoded by weights that depend on the distance between points through a kernel function with a prescribed interaction length scale.
The choice of this length scale plays a crucial role. On the one hand, reducing the number of edges is desirable in order to lower the computational complexity. On the other hand, if the distance between the nodes falls below a certain threshold, the graph may fail to capture the relevant geometric features of the underlying point cloud. 

In the general context outlined above, many machine learning tasks are then formulated as a minimization problem of a functional defined on the graph representing the data set. The consistency of such problems as the number of samples tends to infinity becomes a fundamental question. In a  mathematical perspective, one aims at showing the convergence of discrete variational models posed on random point clouds towards their continuum counterparts. 
This issue has been addressed in the scalar setting in several different settings: for the total variation and perimeter functionals in \cite{Trillos-Slepcev, Trillos2020, Cristoferi-Thorpe}, for the Ginzburg-Landau functional in~\cite{Thorpe-Theil}, for the Dirichlet energy in~\cite{Slepcev-Thorpe}, and in \cite{Caroccia-Chambolle-Slepcev} for the Mumford--Shah functional. In such works, a point cloud is modeled by a set of random points $\{X_1,\ldots, X_n\}$ obtained by sampling a given probability distribution $\nu = \rho \, \de x$ in a bounded domain $D \subset \R^d$. The points are assumed to interact at a given scale $\varepsilon_{n}>0$, which converges to $0$ with a suitable rate depending on the ambient dimension~$d$. Such a rate is dictated by the use of a transport-like distance $TL^{p}$ for $p \in [1, +\infty)$ (see Definition~\ref{d:TL1} and, e.g.,~\cite{Trillos-Slepcev, GTS}), which simultaneously describes the Wasserstein convergence of the empirical measure $\nu_{n} := \frac{1}{n} \sum_{i} \delta_{X_{i}}$ towards~$\nu$ and the convergence of $L^{p}$-functions on graphs towards targets in $L^{p} (D; \nu)$. We further refer to~\cite{Braides-Caroccia, Braides-Piatnitski, Caroccia} for results of compactness, $\Gamma$-convergence, and homogenization on Poisson point clouds, where the use of the $TL^{p}$-distance is not permitted.

Recent works (see \cite{GarciaSellesetal, Sunetal}) suggest the relevance of studying variational models from fracture mechanics  on point cloud data. In particular, these contributions indicate that geometric information extracted from large-scale vectorial point clouds can be used to identify and characterize fracture systems, therefore motivating the formulation of variational fracture models, such as the Griffith functional~\cite{Bourdinetal}, in the discrete setting. 
As a first step toward extending \cite{Trillos-Slepcev, Caroccia-Chambolle-Slepcev} to the vectorial setting, we consider a discrete approximation of the \emph{vectorial symmetric total variation} defined over a set of random points, which can be viewed as a simplified variational model of linear elasticity.

For a reference probability measure $\nu = \rho \, \de x$ absolutely continuous with respect to the Lebesgue measure in~$\R^{d}$ with density $\rho$ having support in an open bounded subset $D$ of~$\R^{d}$, we assume $\rho$ to be continuous, bounded, and bounded away from~$0$ (see also $(\rho1)$--$(\rho2)$ below). We consider $\{X_1,\ldots, X_n\}$ random points i.i.d.~as $\nu$ and fix an interaction length-scale  $\varepsilon_{n}>0$, which determines the neighbourhood within which pairs of points are allowed to interact. 
Given a non-negative, radially symmetric kernel $\eta \colon \R^d \to [0,\infty)$ (see Section \ref{s:2} for the precise assumptions on $\eta$), we define its rescaling at scale $\varepsilon_{n}$ by 
\[
\eta_{\varepsilon_{n}}  :=\frac{1}{\varepsilon_{n}^d}\eta\left(\frac{\cdot}{\varepsilon}\right).
\]
For a vector-valued function $u\colon \{X_1,\ldots, X_n\} \to \R^d$,
the \emph{graph vectorial symmetric total variation} is defined as
\begin{equation}\label{funcloud}
    GTV_{n,\varepsilon_{n}}(u):= \frac{1}{\varepsilon_{n}^2}\frac{1}{n^2} \sum_{i,j=1}^{n} \eta_{\varepsilon_{n}} (X_i-X_j) |(u(X_i)-u(X_j)) \cdot(X_i-X_j)|.
\end{equation}
 The normalization factor $1/n^2$ averages the contributions over all interacting pairs of points. The factor $1/\varepsilon_{n}^2$ provides the correct scaling with respect to the interaction length scale. 
  Indeed, due to the presence of the rescaled kernel $\eta_{\varepsilon_{n}}$,
  only pairs of points at a distance of order $\varepsilon_{n}$ contribute significantly to the sum.
  For such pairs, the increment $u(X_i)-u(X_j)$ is typically of order $\varepsilon_{n}$ if $u$ is smooth. Hence, its projection onto the edge $X_i-X_j$ is of order $\varepsilon_{n}^2$. The prefactor $1/\varepsilon_{n}^2$  compensates for this scaling, thus
  ensuring consistency with the corresponding continuum symmetric total variation in the limit $\varepsilon_{n} \to 0$. Compared to~\cite{Trillos-Slepcev, Caroccia-Chambolle-Slepcev}, the extra projection on $X_{i} - X_{j}$ represents the key change in the graph-functional, as $GTV_{n, \varepsilon_{n}}$ only involves symmetric finite differences, and therefore leads to a $BD$-total variation functional.

  Our main result consists in showing that, under suitable assumptions on the kernel and the distribution of the point clouds, the functionals in \eqref{funcloud} almost surely $\Gamma$-converge in the $TL^1$-topology to a functional which only depends on a weighted and anisotropic symmetric total variation. More precisely we have the following theorem. We refer to Section~\ref{s:2} for the set of assumptions.

\begin{theorem}
\label{thm:main}
Assume $({\rm K1})$--$({\rm K3})$, $(\rho1)$--$(\rho2)$, and~\eqref{eq:asssumption_epsilon}--\eqref{e:second-Tn}. Then, the sequence of functionals ~$GTV_{n}$ almost surely $\Gamma$-converges with respect to the $TL^{1}$-convergence to
\begin{equation*}
    TV_{\eta}(u;\rho^2):=\int_{D }\rho^{2}(x)\phi_{\eta}\bigg ( \frac{Eu(x)}{|Eu(x)|}\bigg ) \,  \de |Eu(x)|,
    \end{equation*}
    where the function $\phi_{\eta} \colon \mathbb{M}^{d}_{sym} \to [0, + \infty)$ is defined as $\phi_{\eta}(A):= \int_{\R^d} \eta(\xi) |A\xi \cdot \xi|\, \de \xi$.
\end{theorem}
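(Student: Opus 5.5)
We follow the strategy of García Trillos--Slepčev, splitting the argument into a deterministic nonlocal part and a probabilistic transport part. First we introduce the continuum nonlocal functional
\[
TV_{\varepsilon}(u;\rho):=\frac{1}{\varepsilon^{2}}\int_{D}\int_{D}\eta_{\varepsilon}(x-y)\,|(u(x)-u(y))\cdot(x-y)|\,\rho(x)\rho(y)\,\de x\,\de y ,
\]
and prove that it $\Gamma$-converges in $L^{1}(D;\R^{d})$ to $TV_{\eta}(u;\rho^{2})$ as $\varepsilon\to0$. We then transfer this to the graph functionals using the transport maps $T_{n}$ with $(T_{n})_{\#}\nu=\nu_{n}$ and $\|T_{n}-\mathrm{id}\|_{\infty}\ll\varepsilon_{n}$ almost surely, which assumption~\eqref{eq:asssumption_epsilon}--\eqref{e:second-Tn} provides. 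Since $u_{n}\to u$ in $TL^{1}$ means $u_{n}\circ T_{n}\to u$ in $L^{1}(\nu)$, we can write
\[
GTV_{n,\varepsilon_{n}}(u_{n})=\frac{1}{\varepsilon_{n}^{2}}\iint\eta_{\varepsilon_{n}}(T_{n}x-T_{n}y)\,|(\tilde u_{n}(x)-\tilde u_{n}(y))\cdot(T_{n}x-T_{n}y)|\,\de\nu(x)\de\nu(y),
\]
with $\tilde u_{n}=u_{n}\circ T_{n}$.

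For the liminf inequality we handle the kernel and the edge vector separately. If $\eta$ is a radial decreasing step function, say $\eta=\mathbbm 1_{B_{1}}$ (and in general after approximating from below by sums of such functions, using (K1)--(K3)), then $|x-y|\le(1-\delta)\varepsilon_{n}$ implies $|T_{n}x-T_{n}y|\le\varepsilon_{n}$. Hence $\eta_{\varepsilon_{n}}(T_{n}x-T_{n}y)\ge(1-\delta)^{-d}\eta_{\tilde\varepsilon_{n}}(x-y)$ with $\tilde\varepsilon_{n}=(1-\delta)\varepsilon_{n}$. The replacement of $T_{n}x-T_{n}y$ by $x-y$ in the projection costs
\[
\frac{1}{\varepsilon_{n}^{2}}\iint\eta_{\varepsilon_{n}}\,|\tilde u_{n}(x)-\tilde u_{n}(y)|\,|T_{n}x-x-(T_{n}y-y)|\le C\,\frac{\|T_{n}-\mathrm{id}\|_{\infty}}{\varepsilon_{n}}\cdot\frac{1}{\varepsilon_{n}}\iint\eta_{\varepsilon_{n}}|\tilde u_{n}(x)-\tilde u_{n}(y)| .
\]
This is the \emph{main obstacle}. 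The symmetric energy does not control the full difference quotient, because there is no nonlocal Korn inequality at our disposal a priori. We would try first to bound this error by the symmetric energy itself, exploiting that the deviation is small relative to $\varepsilon_{n}$. Concretely, we plan to absorb the perturbation into a slightly enlarged and slightly shrunk kernel. On the set where $|(\tilde u(x)-\tilde u(y))\cdot(x-y)|$ is comparable to the perturbed quantity, the term is fine. On the complementary set we use a cone-of-directions argument: projections along nearby directions with angles $O(\|T_{n}-\mathrm{id}\|_{\infty}/\varepsilon_{n})$ can be compared after averaging in $\xi$. Failing that, we would restrict the liminf to the deterministic functional on the good pairs and use lower semicontinuity. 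The continuum liminf itself proceeds by blow-up at $|Eu|$-a.e.\ point. One mollifies $u$, which commutes with the nonlocal functional by convexity and translation invariance up to the continuity of $\rho$ (using $(\rho1)$--$(\rho2)$). For smooth $v$ the pointwise limit is $\int\eta(\xi)|\nabla v(x)\xi\cdot\xi|\,\de\xi=\phi_{\eta}(Ev)$, since $\nabla v\,\xi\cdot\xi=Ev\,\xi\cdot\xi$. Reshetnyak lower semicontinuity then applies, since $\phi_{\eta}$ is convex and $1$-homogeneous.

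For the limsup we use density: Lipschitz (even smooth) fields are dense in $BD$ in strict convergence, and $TV_{\eta}(\cdot;\rho^{2})$ is continuous along strict convergence by Reshetnyak continuity. For $v$ Lipschitz we take $u_{n}=v|_{\{X_{i}\}}$. The perturbation error is now easily bounded by $C\,\mathrm{Lip}(v)\|T_{n}-\mathrm{id}\|_{\infty}/\varepsilon_{n}\to0$, and the kernel is handled by the enlarged radius $(1+\delta)\varepsilon_{n}$. Finally, $\limsup TV_{\varepsilon}(v;\rho)\le TV_{\eta}(v;\rho^{2})$ follows by dominated convergence, with boundary effects near $\partial D$ controlled by the Lipschitz regularity of $D$. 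A diagonal argument completes the recovery sequence.
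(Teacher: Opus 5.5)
Your $\Gamma$-limsup is essentially the paper's argument: strict density of smooth Lipschitz fields with Reshetnyak continuity, the error bound $\mathrm{Lip}(u)\|T_n-Id\|_{L^\infty}/\varepsilon_n\to0$, monotonicity of the kernel, and dominated convergence.

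The $\Gamma$-liminf, however, has a genuine gap exactly at the step you call the main obstacle, and none of your remedies closes it.

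\textbf{Why the proposed remedies fail.} The error you want to discard is bounded by $\frac{\|T_n-Id\|_{L^\infty}}{\varepsilon_n}\cdot\frac{1}{\varepsilon_n}\iint\eta_{\varepsilon_n}(x-y)|\tilde u_n(x)-\tilde u_n(y)|\,\de x\,\de y$. The second factor is a nonlocal approximation of $|Du|(D)$. Its liminf is $+\infty$ whenever $u\in BD(D)\setminus BV(D)$, and in general it grows at a rate you do not control, so the product need not vanish.
\begin{itemize}
\item The cone-of-directions idea hits the same wall. The edge perturbation $(T_n-Id)(x)-(T_n-Id)(y)$ is not a function of $\xi=(x-y)/\varepsilon_n$, so averaging in $\xi$ does not help. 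Moreover, comparing $|w\cdot e|$ with $|w\cdot e'|$ costs $|w|\,|e-e'|$, where $w=\tilde u_n(x)-\tilde u_n(y)$ is the full increment. The symmetric energy does not control it, since there is no Korn inequality in $L^1$.
\item Restricting to ``good pairs'' only bounds $GTV_n$ from below by a truncated functional. You cannot identify its liminf unless the bad pairs carry negligible symmetric energy, which is the same estimate again.
\end{itemize}
Note also that you only ever use $\|T_n-Id\|_{L^\infty}\ll\varepsilon_n$, the $BV$ scaling of Garc\'{\i}a Trillos--Slep\v{c}ev. The paper states that the reduction to an auxiliary nonlocal functional on $L^1(D;\R^d)$ is unavailable because $T_n$ sits inside the projection. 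It also leaves the $\Gamma$-convergence in that weaker regime open.

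\textbf{The missing idea.} Keep $T_n$ in the projection and exploit the stronger scaling \eqref{eq:asssumption_epsilon}, i.e.\ $\|T_n-Id\|_{L^\infty}\lesssim\varepsilon_n^2$, through its consequence \eqref{e:second-Tn}: the second difference of $T_n$ at step $\varepsilon_n$ is $O(\varepsilon_n^2)$.

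After using (K2) only inside the kernel, as you do, the paper slices along $\xi$. It then proves a Gobbino-type bound $\liminf_n F^{\xi,y}_n(v_n,1,[a,b])\ge|(v(b)-v(a))\cdot\xi|$. Take a chain $t_k=\varepsilon_n\tau+k\varepsilon_n$ running from near $a$ to near $b$, and set $c_n(t):=T_n(t)-T_n(t-\varepsilon_n)$. The chain sum is bounded below by the absolute value of the sum of its terms, which is then rewritten by Abel summation:
\[
\sum_{k=1}^{M}\big(v_n(t_k)-v_n(t_{k-1})\big)\cdot c_n(t_k)=v_n(t_M)\cdot c_n(t_M)-v_n(t_0)\cdot c_n(t_1)-\sum_{k=1}^{M-1}v_n(t_k)\cdot\big(c_n(t_{k+1})-c_n(t_k)\big).
\]
Here $c_n(t_{k+1})-c_n(t_k)=T_n(t_{k+1})-2T_n(t_k)+T_n(t_{k-1})$. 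Divide by $\varepsilon_n$.
\begin{itemize}
\item The boundary terms tend to $(v(b)-v(a))\cdot\xi$ up to $O(\sigma)$. This uses Lebesgue points, Egorov, and $c_n/\varepsilon_n\to\xi$ uniformly by \eqref{eq:id-tntozero}.
\item The last sum equals $\int f_{n,\tau}\cdot v_{n,\tau}\,\de s$, where $f_{n,\tau}$ is the piecewise constant second difference quotient of $T_n$ divided by $\varepsilon_n^2$. It is bounded in $L^\infty$ by \eqref{e:second-Tn}.
\item $f_{n,\tau}$ is weakly$^*$ null, because its integrals over intervals telescope to differences of $c_n/\varepsilon_n$.
\item $v_{n,\tau}\to v$ strongly in $L^1$ for a.e.\ shift $\tau$, up to a subsequence, so the last sum vanishes in the limit.
\end{itemize}
The discrete derivative is thus moved from $v_n$ onto $T_n$. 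Only $L^1$ convergence of $v_n$ is used, and no control of the skew part is ever needed. Averaging over shifts, applying Fatou over slices and directions, and using the slicing characterization of $BD$ then give both $u\in BD(D)$ and the lower bound.

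Your mollification/blow-up liminf for the deterministic functional could replace the paper's slicing for that functional. Without a mechanism of this kind, though, it never reaches $GTV_n$.
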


    As in~\cite{Trillos-Slepcev}, our analysis relies the existence of a transport map $T_{n}$ between $\nu$ and $\nu_{n}$ such that 
    \begin{align*}
        \lim_{n\to \infty} \frac{ n^{1/d} \| Id - T_{n}\|_{L^{\infty}}}{\log^{1/d} (n) } = 0\,,
    \end{align*}
    where $Id\colon D \to D$ denotes the identity function. Such a map was proven to exist in~\cite{GTS, Caroccia-Chambolle-Slepcev} and allows to rewrite the functional~\eqref{funcloud} in a continuum setting as
  \begin{align}
  \label{e:intro1}
      GTV_{n,\varepsilon_{n}}(u) = \frac{1}{\varepsilon_n^2}  \iint_{D \times D}  \eta_{\varepsilon_n}(T_n(x)-T_n(y)) \big | &(u \circ T_n (x)-u\circ T_n(y)) \\
      &\cdot (T_n(x)-T_n(y))\big | \rho(x) \rho(y)\, \de x \de y. \nonumber
  \end{align}
  Notice that, even defining $v := u \circ T_{n}$ as a function over $D$ and exploiting the monotonicity properties of~$\eta$ to reduce to work with~$\eta_{\varepsilon_{n}} (x - y)$, we are not in a position to treat the right-hand side of~\eqref{e:intro1} as an auxiliary nonlocal energy defined over $L^{1} (D;\R^{d})$, as it was done in~\cite{Trillos-Slepcev}, as the transport map~$T_{n}$ still appears in the projection part of the integrand. Under the scaling assumption
  \begin{align}
  \label{e:intro2}
      \limsup_{n \to \infty} \, \frac{ \log^{1/d} (n) }{n^{1/d}} \, \frac{1}{\varepsilon^{2}_{n}} < +\infty\,,
  \end{align}
  we are able to perform a slicing argument similar to that of~\cite{Gobbino, Gobbino-Mora, AlmiDavoliKubinTasso}. We point out that condition~\eqref{e:intro2} is stronger than the scaling considered in the $BV$ setting of~\cite{Trillos-Slepcev, Caroccia-Chambolle-Slepcev}, as it implies a uniform control on a discrete second order derivative of~$T_n$ (see \eqref{e:second-Tn} below). From a geometric point of view,~\eqref{e:intro2} means that the cut-off function~$\eta_{\varepsilon}$ in~\eqref{e:intro1} has to weight a larger number of points in order to reconstruct the $BD$-total variation in the limit. A similar phenomenon is encountered in other discrete finite-difference models approximating free-discontinuity functionals involving $BD$-type functions. For instance, we refer to the Ambrosio-Tortorelli approximation on square lattices in~\cite{Crismale-Scilla-Solombrino}, where the authors have to consider next-to-next-nearest neighbours interactions, which are instead not necessary in the $BV$-setting~\cite{BachBraidesZeppieri}. Hypothesis~\eqref{e:intro2} is used in the $\Gamma$-liminf inequality (see Theorem~\ref{thm:liminf}) as well as to ensure that for a sequence $(u_{n}, \nu_{n})$ converging to $(u, \nu)$ in the $TL^{1}$-metric the limit map $u$ belongs to $BD(D)$. In this respect, we notice that, by construction, the~$\varepsilon_{n}$-discrete second order derivative of~$T_{n}$ converges to $0$ in the sense of distributions. Condition~\eqref{e:intro2} improves it to a  weak$^{*}$ convergence in $L^{\infty}(D; \R^{d})$, which is in duality with the $L^{1}$-convergence of~$u_{n} \circ T_{n}$, implied by the $TL^{1}$-convergence.

  An interesting research line would be to understand the $\Gamma$-convergence of $GTV_{n, \varepsilon_{n}}$ in the scaling $\frac{(\log n)^{1/d}}{n^{1/d}} \ll \varepsilon_{n} \ll \frac{(\log n)^{1/2d}}{n^{1/2d}}$, thus recovering the scalings of~\cite{Trillos-Slepcev, Caroccia-Chambolle-Slepcev}. To do this, one may have to investigate how the geometrical and topological properties of the graph influence the regularity of the transport maps~$T_{n}$ constructed in~\cite{GTS}.

\section{Energy and assumptions}
\label{s:2}

Let $\eta \colon \R^d \to [0, +\infty)$ be a radially symmetric kernel, $\eta(x):= \boldsymbol{\eta}(|x|)$ where $\boldsymbol{\eta} \colon [0, + \infty) \to [0, + \infty)$ is such that
\begin{itemize}
    \item[(K1)] $\boldsymbol{\eta}(0)>0$ and $\boldsymbol{\eta}$ is continuous at $0$;
    \item[(K2)] $\boldsymbol{\eta}$ is non-increasing;
    \item[(K3)] the integral $\int_{\R^d} \eta(x) |x|^2 \,\de x $ is finite.
\end{itemize}
For $x \in \R^{d}$ and $\varepsilon>0$ we define
\[
\eta_\varepsilon (x) :=\frac{1}{\varepsilon^d}\eta\left(\frac{x}{\varepsilon}\right).
\]

Let $(\Omega, \mathbb{P}, \mathcal{F})$ be a probability space, let $D \subset \R^d$ be a bounded open set with Lipschitz boundary, and $\rho\colon  D \to \R$ be such that
\begin{itemize}
    \item[$(\rho1)$] $\rho$ is continuous;
    \item[$(\rho2)$] there exist $0 < \alpha \leq \beta < +\infty$ such that $\alpha \leq \rho(x) \leq \beta$ for every $x \in D$.
\end{itemize} 
We define $\nu := \rho \mathcal{L}^d$ and let $X_1,\ldots, X_n \colon \Omega \to D$ be $n$ random points i.i.d. according to~$\nu$. Let $\nu_n$ be the empirical measure associated with the $n$ data points, i.e.,
\[
\nu_n := \frac{1}{n} \sum_{i=1}^n \delta_{X_i}.
\]
Notice that $\nu_{n} \in \mathcal{P} (D)$ is itself a random variable. If not explicitly needed, we will not indicate the dependence on the realization $\omega \in \Omega$, as our analysis holds almost surely (i.e., for $\mathbb{P}$-a.e.~$\omega \in \Omega$). For every $u \colon \{ X_1,\ldots, X_n\} \to \R^d$ we define the \emph{graph vectorial symmetric total variation} by
\begin{equation}
    GTV_{n,\varepsilon}(u):= \frac{1}{\varepsilon^2}\frac{1}{n^2} \sum_{i,j} \eta_\varepsilon(X_i-X_j) |(u(X_i)-u(X_j)) \cdot(X_i-X_j)|.
\end{equation}

We fix a sequence $\varepsilon_{n} >0$ such that
\begin{equation}
\label{eq:asssumption_epsilon}
    \limsup_{n \to \infty} \frac{(\log n)^{1/d}}{n^{1/d}} \frac{1}{\varepsilon^2_n} <+\infty\,.
\end{equation}
In \cite{GTS, Caroccia-Chambolle-Slepcev} it was shown that, for $\mathbb{P}$-a.e.~$\omega \in \Omega$, there exist $C>0$ and  a sequence of transportation maps $\{T_n\}_{n \in \mathbb{N}}$ such that $(T_n)_\# \nu = \nu_n$ and
\begin{align}
\label{e:Tn-scaling}
    \lim_{n \to \infty} \frac{n^{1/d}\|Id-T_n\|_{L^\infty}}{(\log n)^{1/d}} \le C.
\end{align}
We notice that if $\varepsilon_n$ and $T_n$ satisfy~\eqref{eq:asssumption_epsilon}--\eqref{e:Tn-scaling}, then it holds
\begin{align}
\label{eq:id-tntozero}
    &\lim_{n \to \infty} \frac{\|Id -T_n\|_{L^\infty}}{\varepsilon_n} = 0\,,
    \\
\label{e:second-Tn}
    &\limsup_{n \to \infty} \frac{\|T_n(\cdot+\varepsilon_n)-2T_n(\cdot) + T_n(\cdot-\varepsilon_n)\|_{L^\infty}}{\varepsilon^2_n} <+\infty \,.
\end{align}

Given $T_{n}$ as above, we write
\begin{align}
\label{eq:GTV}
&GTV_{n}(u):=GTV_{n, \varepsilon_n}(u)\\
&=\frac{1}{\varepsilon_n^2} \iint_{D \times D} \eta_{\varepsilon_n}(T_n(x)-T_n(y)) \big |(u \circ T_n (x)-u\circ T_n(y)) \cdot (T_n(x)-T_n(y))\big | \rho(x) \rho(y)\, \de x \de y. \nonumber
\end{align}

For $u \in BD(D)$ we define
\begin{equation*}
    TV_{\eta}(u;\rho^2):=\int_{D }\rho(x)^2\phi_{\eta}\bigg ( \frac{Eu(x)}{|Eu(x)|}\bigg ) \,  \de |Eu(x)|,
\end{equation*}
where we have introduces the norm on symmetric  matrices $\phi_{\eta} \colon \mathbb{M}^{d}_{sym} \to [0, + \infty)$ as
    \[
    \phi_{\eta}(A):= \int_{\R^d} \eta(\xi) |A\xi \cdot \xi|\, \de \xi.
    \]

In the following definition we recall the $TL^{1}$-convergence introduced in~\cite{Trillos-Slepcev}.

\begin{definition}
\label{d:TL1}
Let $\mu_{1}, \mu_{2} \in \mathcal{P} (D)$, $w_{1} \in L^{1} (D; \R^{d}; \mu_{1})$ and $w_{2} \in L^{1} (D; \R^{d}; \mu_{2})$. We define the $TL^{1}$-distance as
\begin{align*}
\de_{TL^{1}} \big( ( w_{1}, \mu_{1}) , (w_{2} , \mu_{2} ) \big) := \inf_{\gamma \in \Gamma(\mu_{1}, \mu_{2})} \iint_{D \times D}| x - y | +  | w_{1} (x) - w_{2} (y) | \, \de \gamma(x, y)\,,
\end{align*} 
where $\Gamma (\mu_{1}, \mu_{2})$ denotes the set of transport plans between $\mu_{1}$ and $\mu_{2}$. For $\mu_{n}, \mu \in \mathcal{P} (\R^{d})$, $w_{n} \in L^{1} (\Om; \R^{d}; \mu_{n})$, and $w \in L^{1} (D; \R^{d}; \mu)$, we say that $(w_{n}, \mu_{n}) \to (w, \mu)$ in the $TL^{1}$-metric if
\begin{align*}
\lim_{n \to \infty} \, \de_{TL^{1}} \big( (w_{n}, \mu_{n}) , (w, \mu)\big) = 0\,.
\end{align*}
\end{definition}

The following characterization can be found in~\cite[Proposition 3.12]{Trillos-Slepcev}.

\begin{proposition}
\label{p:TL1-convergence}
Let $\mu_{n}, \mu \in \mathcal{P} (\R^{d})$, $w_{n} \in L^{1} (D; \R^{d}; \mu_{n})$, and $w \in L^{1} (D; \R^{d}; \mu)$, and assume that $\mu \ll \mathcal{L}^{d}$. Then, the following are equivalent:
\begin{itemize}
\item[(1)] $(w_{n}, \mu_{n}) \to (w, \mu)$ in the $TL^{1}$-metric;

\item[(2)] for every transport map $S_{n} \colon D \to D$ (i.e., such that $(S_{n})_{\#} \mu = \mu_{n}$) such that
\begin{align*}
\lim_{n \to \infty} \int_{D} | x - S_{n} (x) | \, \de \mu(x) = 0
\end{align*} 
we have that
\begin{align*}
\lim_{n \to \infty} \int_{D} | w(x)-w_{n} (S_{n} (x))| \, \de \mu(x) = 0\,.
\end{align*}
\end{itemize}
\end{proposition}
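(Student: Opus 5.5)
We prove the two implications separately. The direction $(1)\Rightarrow(2)$ uses a gluing argument that compares an optimal $TL^{1}$ plan with the plan induced by an arbitrary transport map $S_n$. The direction $(2)\Rightarrow(1)$ is essentially immediate once suitable maps $S_n$ exist.

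\emph{$(2)\Rightarrow(1)$.} Suppose we have transport maps $S_n$ with $(S_n)_\#\mu=\mu_n$ and $\int_D|x-S_n(x)|\,\de\mu\to0$. Then $\pi_n:=(Id,S_n)_\#\mu\in\Gamma(\mu,\mu_n)$ is an admissible plan, and
\begin{align*}
\de_{TL^1}\big((w,\mu),(w_n,\mu_n)\big)\le \int_D |x-S_n(x)|+|w(x)-w_n(S_n(x))|\,\de\mu(x),
\end{align*}
which tends to $0$ by (2). The only delicate point is the existence of such maps. Since $(2)$ is vacuous otherwise, I will read the statement (as in~\cite{Trillos-Slepcev}) under the standing assumption that $\mu_n\rightharpoonup\mu$ narrowly. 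Because $D$ is bounded, this is equivalent to convergence in the Wasserstein distance. Since $\mu\ll\mathcal L^d$, Brenier's theorem gives optimal maps $S_n$ for the quadratic cost with $\int_D|x-S_n|^2\,\de\mu\to0$, hence also $\int_D|x-S_n|\,\de\mu\to0$. Note also that $(1)$ already forces $W_1(\mu,\mu_n)\to0$, so this assumption is harmless in the other direction.

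\emph{$(1)\Rightarrow(2)$.} Fix $S_n$ as in (2) and set $\pi_n:=(Id,S_n)_\#\mu$. Let $\gamma_n\in\Gamma(\mu,\mu_n)$ be plans realizing the infimum in $\de_{TL^1}$ up to an error $1/n$.
\begin{enumerate}
\item Fix $\sigma>0$. Using $\mu\ll\mathcal L^d$ and density of continuous functions in $L^1(D;\R^d;\mu)$, choose $\varphi\in C_c(\R^d;\R^d)$ with $\int_D|w-\varphi|\,\de\mu<\sigma$.
\item By the gluing lemma, using the common marginal $\mu_n$, build a measure $\lambda_n$ on $D^3$ with $(x,y)$-marginal $\pi_n$ and $(z,y)$-marginal $\gamma_n$.
\item Bound
\begin{align*}
\int_D|w-w_n\circ S_n|\,\de\mu\le \sigma+\int_{D^3}\Big(|\varphi(x)-\varphi(z)|+|\varphi(z)-w(z)|+|w(z)-w_n(y)|\Big)\,\de\lambda_n .
\end{align*}
The second term is less than $\sigma$. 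The third term is at most $\de_{TL^1}+1/n\to0$.
\end{enumerate}

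The step requiring care, though not a real obstacle, is the first term. Under $\lambda_n$ we have
\begin{align*}
\int|x-z|\,\de\lambda_n\le\int|x-S_n(x)|\,\de\mu+\int|z-y|\,\de\gamma_n\to0 .
\end{align*}
Let $\omega_\varphi$ denote the modulus of continuity of $\varphi$. Splitting according to whether $|x-z|<\delta$, and applying Markov's inequality on the complement, gives
\begin{align*}
\int|\varphi(x)-\varphi(z)|\,\de\lambda_n\le \omega_\varphi(\delta)+2\|\varphi\|_\infty\,\delta^{-1}\int|x-z|\,\de\lambda_n .
\end{align*}
Letting first $n\to\infty$, then $\delta\to0$, and finally $\sigma\to0$, we conclude $\int_D|w-w_n\circ S_n|\,\de\mu\to0$, which is (2).
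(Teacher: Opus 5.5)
Your proof is correct. The paper gives no argument of its own here; it only quotes \cite[Proposition~3.12]{Trillos-Slepcev}. Your self-contained proof follows the standard route behind that result. You glue the map-induced plan $(Id,S_n)_\#\mu$ with a near-optimal $TL^1$ plan along the common marginal $\mu_n$, and note that the resulting $(x,z)$-marginal is a plan in $\Gamma(\mu,\mu)$ with vanishing transport cost. Density of continuous functions then makes $w(x)-w(z)$ small in $L^1$ along that plan. For the converse, absolute continuity of $\mu$ gives transport maps (Brenier) once $\mu_n\to\mu$ in the Wasserstein sense.

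Two remarks. First, the hypothesis $\mu_n\rightharpoonup\mu$ that you add to (2) is not just a convenient reading: it is necessary, and it belongs to the original formulation in \cite{Trillos-Slepcev}. Without it, (2)$\Rightarrow$(1) is false. Take $\mu$ uniform on $D$, $\mu_n=\delta_{x_0}$ with $x_0\in D$, and $w_n=w=0$. No admissible $S_n$ satisfies $\int_D|x-S_n(x)|\,\de\mu\to0$, so (2) holds vacuously, while $\de_{TL^1}\big((w_n,\mu_n),(w,\mu)\big)\ge\int_D|x-x_0|\,\de\mu(x)>0$.

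Second, the direction (1)$\Rightarrow$(2) does not need $\mu\ll\mathcal{L}^d$, since continuous functions are dense in $L^1(\mu)$ for every finite Borel measure on $\R^d$. Absolute continuity enters only through the existence of the maps $S_n$ in (2)$\Rightarrow$(1).
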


The proof of Theorem~\ref{thm:main} is carried out in the next two sections, where we prove the $\Gamma$-liminf and the $\Gamma$-limsup inequalities, respectively.

\section{Gamma liminf inequality}

In this section we establish the liminf inequality for the functionals $GTV_{n}$. 

\begin{theorem}\label{thm:liminf}
    Assume $({\rm K1})$--$({\rm K3})$, $(\rho1)$--$(\rho2)$, and~\eqref{eq:asssumption_epsilon}. For $\mathbb{P}$-a.e.~$\omega \in \Omega$, for every 
    $u \in L^1(D;\R^d;\nu)$ and $\{u_{n}\}_{n \in \N} \subset L^1(\{X_1,\ldots, X_{n}\};\R^d;\nu_{n})$ such that $(\nu_{n},u_{n}) \to (\nu,u)$ in $TL^1$, there exists a subsequence $\varepsilon_n \to 0$ such that
    \begin{equation*}
        \liminf_{n \to \infty} GTV_{n}(u_{n}) \ge  TV_{\eta}(u;\rho^2)\,.
    \end{equation*}
    In particular, $u \in BD(D)$. 
\end{theorem}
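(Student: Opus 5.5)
We first reduce to a nonlocal functional on $D$ and then slice it. Let $v_n := u_n\circ T_n \in L^1(D;\R^d)$. By Proposition~\ref{p:TL1-convergence} and~\eqref{e:Tn-scaling}, $v_n \to u$ in $L^1(D;\R^d)$. We may assume $\liminf_n GTV_n(u_n)<\infty$ and pass to a subsequence realizing the liminf.

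\emph{Step 1: removing $T_n$ from the kernel.} As in~\cite{Trillos-Slepcev}, we first treat a compactly supported $\eta$ with $\boldsymbol\eta$ continuous on its support, say $\eta \ge c\,\mathbbm{1}_{B_r}$ locally. The general case follows by approximating $\eta$ from below by such kernels and using monotone convergence in $\phi_\eta$. By~\eqref{eq:id-tntozero}, $|T_n(x)-T_n(y)| \le |x-y| + 2\|Id-T_n\|_{L^\infty} = |x-y|+o(\varepsilon_n)$. Using (K2) and setting $\tilde\varepsilon_n := \varepsilon_n + 2\|Id-T_n\|_\infty/r \sim \varepsilon_n$, we get $\eta_{\varepsilon_n}(T_n(x)-T_n(y)) \ge (\tilde\varepsilon_n/\varepsilon_n)^d \eta_{\tilde\varepsilon_n}(x-y)$. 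Continuity of $\rho$ lets us replace $\rho(x)\rho(y)$ by $\rho(x)^2 - o(1)$ on the interaction range. The projection term still involves $T_n$, so we write
\[
(v_n(x)-v_n(y))\cdot(T_n(x)-T_n(y)) = (v_n(x)-v_n(y))\cdot(x-y) + (v_n(x)-v_n(y))\cdot R_n(x,y),
\]
where $R_n := (T_n-Id)(x)-(T_n-Id)(y)$.

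\emph{Step 2: slicing.} Change variables $y = x+\varepsilon_n h$. This gives
\[
GTV_n(u_n) \gtrsim \int_{\R^d}\eta(h)\int_{D_{h}} \rho^2(x)\,\Big|\frac{(v_n(x+\varepsilon_n h)-v_n(x))\cdot h}{\varepsilon_n}\Big|\,\de x\,\de h - \text{error}_n,
\]
where $D_h$ is the set of points with $x+\varepsilon_n h \in D$. For fixed direction $\xi = h/|h|$, the inner quantity is the discrete difference quotient of the scalar slices $t\mapsto v_n(z+t\xi)\cdot\xi$. Following~\cite{Gobbino, Gobbino-Mora, AlmiDavoliKubinTasso}, we telescope over step $\varepsilon_n|h|$ and compare with the piecewise-constant interpolations on $\varepsilon_n|h|\Z$-lattices, averaged over the offset. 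A weighted lower semicontinuity argument for one-dimensional total variation, applied to slices converging in $L^1$, then gives
\[
\liminf_n \ge \int\eta(h)\,|h|^2\int\rho^2\,\de|D_\xi(u\cdot\xi)|\,\de h .
\]
In the regime where this is finite for a.e.\ $\xi$, the slice characterization of $BD$ yields $u\in BD(D)$ with $D_\xi(u\cdot\xi) = E u\,\xi\cdot\xi$. Integrating in polar coordinates then gives exactly $\int \rho^2\,\phi_\eta(\tfrac{Eu}{|Eu|})\,\de|Eu|$. Localizing on open sets and taking a supremum over partitions handles the $x$-dependence of $\rho^2$ and the anisotropy.

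\emph{Step 3 (main obstacle): the error term.} We must show
\[
\frac{1}{\varepsilon_n^2}\iint\eta_{\varepsilon_n}\,|(v_n(x)-v_n(y))\cdot R_n(x,y)|\to0.
\]
The bound $|R_n|\le 2\|Id-T_n\|_\infty = o(\varepsilon_n)$ is not enough, because $|v_n(x)-v_n(y)|$ is only $O(1)$ in $L^1$, not $O(\varepsilon_n)$. Instead we pass the difference onto $T_n$ by discrete summation by parts along each slice. Fixing $h$ and summing in $x$ over the lattice $x_0+\varepsilon_n h\Z$, the term $\sum_k (v_n(x_{k+1})-v_n(x_k))\cdot R_n(x_{k+1},x_k)$ becomes $-\sum_k v_n(x_k)\cdot(\text{second difference of } T_n)$ plus boundary terms. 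By~\eqref{e:second-Tn}, which was derived from~\eqref{eq:asssumption_epsilon}, the second difference is $O(\varepsilon_n^2)$ uniformly. After dividing by $\varepsilon_n^2$, this bounds the error by $C\|v_n\|_{L^1}$ rather than making it vanish. Two further points are needed here: \eqref{e:second-Tn} is stated with step $\varepsilon_n e$, and we apply it with step $\varepsilon_n h$ for $h$ in the bounded support, handling the dependence on $h$ by rescaling. The key additional input is that this second difference, divided by $\varepsilon_n^2$, converges to $0$ weakly$^*$ in $L^\infty$. Indeed it tends to $0$ distributionally, since $T_n-Id\to0$ uniformly, and it is bounded in $L^\infty$. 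Since $v_n\to u$ strongly in $L^1$, the duality pairing tends to $0$. Boundary terms near $\partial D$ are controlled by the Lipschitz regularity of $D$ and an interior localization. The delicate part is making this summation by parts rigorous with the absolute value present. We plan to use it only as a lower bound, $|a+b|\ge |a| - \sigma\, b$ with $\sigma=\operatorname{sign}(a+b)$ a measurable sign, but then $\sigma$ also enters the summation by parts, which is exactly where we expect the argument to require the most care.
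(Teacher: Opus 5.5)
There is a genuine gap in Step~3. You flag it as the main obstacle and leave it open, and the mechanism you propose for it cannot work.

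\textbf{Why your Step~3 plan fails.} Once the error carries an absolute value, any pointwise lower bound $|a+b|\ge |a|+s\,b$ introduces a sign $s=s_{n,k}$ that depends on $n$ and on the lattice index. The correct choice is $s=\operatorname{sign}(a)$. The inequality $|a+b|\ge|a|-\operatorname{sign}(a+b)\,b$ you wrote is false, e.g.\ for $a=-1$, $b=1/2$. Summing by parts then produces, wherever $s$ changes sign, a term $v_n(x_k)\cdot(s_k\Delta R_k-s_{k-1}\Delta R_{k-1})$, which is not a second difference of $T_n$. The weak$^*$-null second differences are now effectively multiplied by an oscillating $n$-dependent sign, and $s\,v_n$ has no reason to converge strongly in $L^1$. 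So the duality argument gives nothing beyond the bound $C\|v_n\|_{L^1}$ you already had. That duality only works without absolute values.

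\textbf{How the paper avoids this.} The paper never isolates such an error term. It slices first, and in Lemma~\ref{lemma:gob3.2} it applies $\sum_k|a_k|\ge|\sum_k a_k|$ along discrete chains joining two Lebesgue points \emph{before} summing by parts. The second differences of $T_n$ are then tested, with no sign, against the piecewise constant interpolants $v_{n,\tau}$, which converge strongly for a.e.\ offset $\tau$. The resulting per-interval bound $|(v(b)-v(a))\cdot\xi|$ is summed over fine partitions via Lemma~\ref{lemma:3.3gob}.

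\textbf{How to repair your decomposition.} Your decomposition can be saved more cheaply than you expect, because your diagnosis is off on two counts.

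First, the hypotheses give more than $|R_n|=o(\varepsilon_n)$. Combining \eqref{e:Tn-scaling} with \eqref{eq:asssumption_epsilon} yields $\|Id-T_n\|_{L^\infty}\le C\varepsilon_n^2$, hence $|R_n|\le 2C\varepsilon_n^2$. This is what lies behind \eqref{e:second-Tn}, since the second differences of $T_n$ coincide with those of $T_n-Id$.

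Second, $v_n(x)-v_n(y)$ is not merely $O(1)$ on average at scale $\varepsilon_n$. Since $v_n\to u$ strongly in $L^1(D;\R^d)$ and the reduced kernel is integrable,
\[
\iint_{D\times D}\eta_{\varepsilon_n}(x-y)\,|v_n(x)-v_n(y)|\,\de x\,\de y\le 2\|\eta\|_{L^1}\|v_n-u\|_{L^1(D)}+\int_{\R^d}\eta(h)\,\|\bar u(\cdot+\varepsilon_n h)-\bar u\|_{L^1(\R^d)}\,\de h\longrightarrow 0,
\]
where $\bar u$ is the extension of $u$ by zero; this uses continuity of translations in $L^1$ and dominated convergence.

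Hence your error term is at most $2C\beta^2$ times this quantity and tends to $0$, with no summation by parts. What remains is the $T_n$-free functional with $x-y$ in the projection. Your Step~2, i.e.\ the scalar Gobbino argument for $v_n\cdot\xi$ on slices (Lemma~\ref{lemma:gob3.2} with $T_n=Id$), then gives the bound and $u\in BD(D)$.

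\textbf{Comparison with the paper.} With this fix your route is shorter than the paper's. The price is that it uses the full rate $\|Id-T_n\|_{L^\infty}=O(\varepsilon_n^2)$. The paper's chain argument only needs \eqref{eq:id-tntozero} together with bounded second differences \eqref{e:second-Tn}.

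\textbf{A smaller slip in Step~1.} To bound the kernel from below you must shrink the scale, $\tilde\varepsilon_n=\varepsilon_n-2\|Id-T_n\|_{L^\infty}/r$; your plus sign gives the upper bound instead. Also, the monotonicity trick is clean only for step kernels $c\,\mathbf{1}_{[0,r)}$. The general case then follows by superadditivity and monotone approximation from below, as in Remark~\ref{rmk:1}, not merely from kernels continuous on their support.
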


\begin{remark}\label{rmk:1}
    As in \cite[Section~5]{Trillos-Slepcev}, we may assume that the kernel $\eta$ is of the form $\eta(t)=c$ for $t<b$ and $\eta(t)=0$ for $t\ge b$. Indeed, if we can establish the liminf inequality under this assumption, then, by the superadditivity of the liminf, the same inequality also holds for functions of the form $\eta=\sum_{k=1}^l \eta_k$, for some $l\in\mathbb{N}$, where each $\eta_k$ is of the above form. Finally, the case of general $\eta$ follows by considering an increasing sequence of piecewise constant functions $\eta_n \colon [0, +\infty)\to [0, + \infty)$ such that $\eta_n \nearrow \eta$ almost everywhere, and by the continuity of the map $\eta \mapsto TV_{\eta}$.
\end{remark}

%We first argue as in \cite[Section $5$]{Trillos-Slepcev}. 

By \cite{GTS, Caroccia-Chambolle-Slepcev}, we consider $\omega \in \Omega$ and $T_{n}\colon D \to D$ such that~\eqref{e:Tn-scaling} (and thus~\eqref{eq:id-tntozero}--\eqref{e:second-Tn}) holds. In view of Remark \ref{rmk:1}, we assume for the remaining part of this section that $\eta$ is of the form $\eta(t)=c$ for $t< b$ and $\eta(t)=0$ for $t\ge b$.
For almost every $(x,y) \in D \times D$ we have
\begin{equation}\label{eq:implicaT_n}
    |T_n(x)-T_n(y)|>b \varepsilon_n \Rightarrow |x-y|>b \varepsilon_n -2 \|Id-T_n\|_{L^\infty}.
\end{equation}

Thanks to \eqref{eq:id-tntozero}, for $n$ large enough it holds
\[
\tilde \varepsilon_n :=\varepsilon_n - \frac{2}{b} \|Id-T_n\|_{L^\infty} >0.
\]
By \eqref{eq:implicaT_n} and $({\rm K2})$, for $n$ large enough and for almost every $(x,y) \in D \times D$ we get
\[
\eta \left ( \frac{|x-y|}{\tilde \varepsilon_n}\right ) \le \eta \left (\frac{|T_n(x)-T_n(y)|}{\varepsilon_n} \right ).
\]
Hence, we have the lower bound
\begin{align*}
    &GTV_{n}(u_n)=GTV_{n,\varepsilon_n}(u_n) \\
    &\ge \left (\frac{\tilde \varepsilon_n}{\varepsilon_n} \right)^{d+2} \frac{1}{\tilde \varepsilon_n^2}\iint_{{}_{\scriptstyle D \times D}} \! \frac{1}{\tilde\varepsilon_n^d}\eta \left (\frac{|x-y|}{\tilde \varepsilon_n} \right ) |(u_n\circ T_n(x)-u_n\circ T_n(y)){\, \cdot \,} (T_n(x)-T_n(y))|\rho(x) \rho(y)\di x \di y .
\end{align*}

Since $\frac{\tilde \varepsilon_n}{\varepsilon_n} \to 1$, it is enough to prove the liminf estimate for
\begin{equation*}
    \frac{1}{\tilde \varepsilon_n^2} \iint_{D \times D} \eta_{\tilde \varepsilon_n}(x-y) \big |(u_n \circ T_n (x)-u_n\circ T_n(y)) {\, \cdot \,}  (T_n(x)-T_n(y))\big | \rho(x) \rho(y)\di x \di y .
\end{equation*}
For notational convenience, we set $v_n := u_n \circ T_n$ and we drop the tilde in the above expression. Hence, after a change of variables, we consider the following sequence
\begin{align*}
\frac{1}{\varepsilon_n^2} \int_{\frac{D-D}{\varepsilon_n}} \int_{D\cap(D-\varepsilon_n \xi)} \eta(\xi) \big |(v_n (x+\varepsilon_n\xi)-v_n(x)) \cdot (T_n(x+\varepsilon_n\xi)-T_n(x))\big | \rho(x+\varepsilon_n\xi) \rho(x)\di x \di \xi.
\end{align*}

In particular, for any function $w\colon D \to \R^d$ we define the following functional, which only takes into account the integral in the $x$-variable:
%\begin{align*}
% \frac{1}{\varepsilon^2} \int_{\Omega \cap (\Omega-\varepsilon \xi)}  \big |(u \circ T_n (x+\varepsilon\xi)-u\circ T_n(x)) \cdot (T_n(x+\varepsilon\xi)-T_n(x))\big | \rho(x+\varepsilon\xi) \rho(x),
%\end{align*}
%and we set
\begin{align*}
F_n^\xi(w,D):= \frac{1}{\varepsilon_{n}^2} \int_{D \cap (D-\varepsilon_{n} \xi)} \! \big |(w (x+\varepsilon_{n}\xi)-w (x)) \cdot (T_n(x+\varepsilon_{n}\xi)-T_n(x))\big | \rho(x+\varepsilon_{n}\xi) \rho(x)\di x.
\end{align*}
As we are going to work with $1$-dimensional slices of the functions $v_{n}$, for a function $w \colon \R^{d} \to \R^{d}$ we introduce the notation
\[
w^{\xi,y}(t):=w (y+t \xi) \qquad \text{for $\xi \in \mathbb{R}^{d} \setminus\{0\}$, $y \in \Pi^{\xi}$, and $t \in \R$.}
\]
Let $y \in \Pi^\xi$ and $I\subset (D \cap (D-\varepsilon \xi))_y^\xi$, we also define for any $v \colon I \to \R^d$ the functional
\begin{align}
\label{eq:1dfun}
&F^{\xi,y}_{n}(v ,\rho, I):= \frac{1}{\varepsilon_n^2} \int_{I}  \big |(v (t+\varepsilon_n)-v (t )) \cdot (T_n^{\xi, y} ( t + \varepsilon_{n}) -T_n^{\xi, y} (t ))\big | \rho^{\xi, y}( t + \varepsilon_n ) \rho^{\xi, y}(t) \, \de t.
\end{align}

We now prove a one-dimensional lemma similar to \cite[Lemma 3.2]{Gobbino}, where we take the weight~$\rho$ to be identically equal to $1$.

\begin{lemma}\label{lemma:gob3.2}
 Assume that~\eqref{eq:asssumption_epsilon} holds, and let $\xi \in \R^d$ and $y \in \Pi^\xi$. Let $I=[a,b]\subset \R$ be a finite interval and $v_n, v \in L^1(I;\R^d)$  be such that:
    \begin{itemize}
        \item[(i)] $v_n \to v$ in $L^1(I;\R^d)$;
        \item[(ii)] $a$ and $b$ are Lebesgue points of $v$.
    \end{itemize} 
    Then
    \begin{equation}\label{eq3.9g}
    \liminf_{n  \to \infty} F^{\xi,y}_n(v_n,1,I) \ge  |(v (b) - v (a))\cdot \xi|\,.
    \end{equation}
\end{lemma}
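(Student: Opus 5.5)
The plan is to follow Gobbino's discretization-along-chains argument and to handle the extra factor $T_n^{\xi,y}(t+\varepsilon_n)-T_n^{\xi,y}(t)$ by a discrete summation by parts, which is where \eqref{e:second-Tn} enters. Write $\varepsilon=\varepsilon_n$, $T=T_n^{\xi,y}$, and decompose
\[
T(t+\varepsilon)-T(t)=\varepsilon\xi+r_n(t),\qquad r_n(t):=(T-Id)^{\xi,y}(t+\varepsilon)-(T-Id)^{\xi,y}(t).
\]
By \eqref{eq:id-tntozero} we have $\|r_n\|_\infty=o(\varepsilon)$. Moreover $r_n(t)-r_n(t-\varepsilon)$ is the second difference of $T$ along the slice, so we expect $\|r_n(\cdot)-r_n(\cdot-\varepsilon)\|_\infty\le C\varepsilon^2$. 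This should follow from \eqref{e:second-Tn}, read in the direction $\xi$ with step $\varepsilon\xi$, with $C$ depending on $|\xi|$.

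\textbf{Chains and telescoping.} Fix $\delta>0$ small. For $s\in[0,\delta)$ consider the chain $t_k=a+s+k\varepsilon$, $k=0,\dots,N$, with $N=N(s,n)$ chosen so that $t_N\in[b-\delta,b]$. On each chain the triangle inequality gives
\[
\sum_{k<N}\bigl|(v_n(t_{k+1})-v_n(t_k))\cdot(T(t_{k+1})-T(t_k))\bigr|
\ge \varepsilon\bigl|(v_n(t_N)-v_n(t_0))\cdot\xi\bigr|-\Bigl|\sum_{k<N}(v_n(t_{k+1})-v_n(t_k))\cdot r_n(t_k)\Bigr|.
\]
Summation by parts rewrites the last sum as
\[
-\sum_k (v_n(t_k)-c)\cdot\bigl(r_n(t_k)-r_n(t_{k-1})\bigr)+\text{boundary terms},
\]
where the constant $c$ can be inserted for free because the constant part telescopes into boundary terms. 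The interior part is therefore bounded by $C\varepsilon^2\sum_k|v_n(t_k)-c|$. The boundary terms are bounded by $o(\varepsilon)\bigl(|v_n(t_0)-c|+|v_n(t_N)-c|\bigr)$.

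\textbf{Averaging over $s$.} Each $t\in I$ is represented as $a+s+k\varepsilon$ with $s\in[0,\delta)$ for at most $\delta/\varepsilon+1$ pairs $(s,k)$. Integrating the chain inequality in $s$ over $[0,\delta)$ and dividing by $\varepsilon^2$ therefore gives
\[
(1+\tfrac{\varepsilon}{\delta})F^{\xi,y}_n(v_n,1,I)\ \ge\ \frac1\delta\int_0^\delta\bigl|(v_n(t_N(s))-v_n(a+s))\cdot\xi\bigr|\,ds\;-\;C\!\int_I|v_n-c|\,dt\;-\;o(1)\,\frac1\delta\!\int_{W_a\cup W_b}|v_n-c|,
\]
where $W_a=[a,a+\delta]$ and $W_b=[b-\delta,b]$ are the endpoint windows. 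The map $s\mapsto t_N(s)$ is a piecewise translation onto $W_b$ with bounded multiplicity. Since $\delta$ is fixed, $L^1$ convergence $v_n\to v$ lets us pass to the limit $n\to\infty$, replacing $v_n$ by $v$ in the first and second terms, while the third term vanishes. Then letting $\delta\to0$ and using that $a,b$ are Lebesgue points (hypothesis (ii)) turns the first term into $|(v(b)-v(a))\cdot\xi|$.

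\textbf{Main obstacle and how to handle it.} The hard step is the error term $C\int_I|v-c|$: it does not vanish as $n\to\infty$. To fix this, first partition $I$ into subintervals $I_i=[a_i,a_{i+1}]$ whose endpoints are Lebesgue points of $v$ and whose mesh is small. Apply the above on each $I_i$ with $c_i$ equal to the mean of $v$ on $I_i$, then use the superadditivity of $\liminf$ over the $I_i$ together with the triangle inequality
\[
\sum_i\bigl|(v(a_{i+1})-v(a_i))\cdot\xi\bigr|\ \ge\ \bigl|(v(b)-v(a))\cdot\xi\bigr|.
\]
The total error is then $C\sum_i\int_{I_i}|v-c_i|$. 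This tends to $0$ as the mesh tends to $0$ for any $v\in L^1$, which one sees by approximating $v$ with continuous functions. This yields \eqref{eq3.9g}.
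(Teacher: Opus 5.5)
Your proof is correct, but it disposes of the transport-map error differently from the paper. Both arguments move the finite difference from $v_n$ onto the increments of $T_n^{\xi,y}$ by Abel summation along chains of step $\varepsilon_n$, so that second differences of $T_n$ appear.

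The paper then follows Gobbino's good-set bookkeeping: the sets $A_\delta$, $B_\delta$, a Severini--Egorov set $I_\delta$, and $|C_n|/\varepsilon_n\to1$. It shows that the interior sum vanishes outright on the given interval. The rescaled second differences $f_{n,\tau}$ are bounded in $L^\infty$ by \eqref{e:T-sup-L} and converge weakly$^*$ to $0$, since their integrals telescope to differences of first differences, which tend to $\xi-\xi$ by \eqref{e:T-limit-0}. Meanwhile the chain-sampled step functions $v_{n,\tau}$ converge to $v$ in $L^1$ for a.e.\ offset $\tau$.

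You use only the $L^\infty$ bound, and you average the chain inequality over the offset instead of tracking good sets. You accept the non-vanishing remainder $C\int_I|v-c|$ and remove it through the free constant $c$ plus a localization: a Lebesgue-point partition, local means, superadditivity of the $\liminf$, the triangle inequality on the target, and $\sum_i\int_{I_i}|v-c_i|\to0$. This avoids Egorov, the weak$^*$ limit and any pointwise-in-$\tau$ convergence. The cost is an extra partition layer, which is harmless because the lemma is applied on the partition $\{I^z_j\}$ in the proof of Theorem~\ref{thm:liminf} anyway. The paper's route instead gives vanishing of the error on a single interval.

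Two small points. First, fix $N$ independently of $s$, e.g.\ $N=\lfloor (b-a-\delta)/\varepsilon_n\rfloor$, so that $s\mapsto t_N(s)$ is a single translation into $[b-2\delta,b]$. If you took the last chain point before $b$, all endpoints would be squeezed into a window of width $\varepsilon_n$, and the $L^1$ passage to the limit would fail.

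Second, the second-difference bound you only ``expect'' is immediate and uniform in $\xi$. Second differences of the identity vanish, so $|T_n(x+h)-2T_n(x)+T_n(x-h)|\le 4\|T_n-Id\|_{L^\infty}$ for every step $h$. In fact \eqref{eq:asssumption_epsilon} and \eqref{e:Tn-scaling} give $\|T_n-Id\|_{L^\infty}=O(\varepsilon_n^2)$, hence $|r_n|=O(\varepsilon_n^2)$. The crude bound $\varepsilon_n^{-2}|(v_n(t+\varepsilon_n)-v_n(t))\cdot r_n(t)|\le C|v_n(t+\varepsilon_n)-v_n(t)|$ then already gives an error tending to $0$, by $L^1$-continuity of translations. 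So under the lemma's actual hypotheses one can reduce directly to the case $T_n=Id$, i.e.\ to Gobbino's scalar estimate for $v_n\cdot\xi$. Both your argument and the paper's really need only the weaker pair \eqref{eq:id-tntozero}, \eqref{e:second-Tn}.
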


\begin{proof}
To simplify the notation, let us assume that $a=0$.  Let $T_n$ be the transport map as in \eqref{e:Tn-scaling}.
We notice that in view of \eqref{eq:asssumption_epsilon} and \eqref{e:second-Tn} we have that there exists $L \in (0, +\infty)$ (independent of~$\xi$ and $y$) such that
\begin{align}
& \lim_{n \to \infty} \frac{\|T_n^{\xi,y}-(y+\cdot \xi)\|_{L^\infty}}{\varepsilon_n} = 0\,,\label{e:T-limit-0}\\
& \sup_n\frac{\|T_n^{\xi,y}(\cdot+\varepsilon_n)-2T_n^{\xi,y}(\cdot)+T_n^{\xi,y}(\cdot-\varepsilon_n)\|_{L^\infty}}{\varepsilon_n^2}\le L\,. \label{e:T-sup-L}
\end{align}
To simplify the notation we drop the dependence on $\xi$ and $y$.

Let us set $J:=|(v(b)-  v(0))\cdot \xi|$. If $J=0$ there is nothing to prove. Hence, we can assume that $J>0$. Moreover, up to a subsequence, we can also assume that
\[
\liminf_{n  \to \infty} F_n(v_n, 1, I)=\lim_{n \to \infty} F_n(v_n, 1 , I)
\]
and
\[
v_n(t) \to v(t) \quad \text{for a.e. } t \in I.
\]

Set  $C:=4(1+|\xi|)+|v(b)|+ |v(0)|$.
Fix $\sigma \in (0,J/C]$, and let $N_{\varepsilon_n}=[|I|/\varepsilon_n]$. We define the set
\begin{align*}
C_n:= 
\bigg \{ t \in [0,\varepsilon_n]: \sum_{k=1}^{N_{\varepsilon_n}} & \ \frac{1}{\varepsilon_n}|v_n(t+k\varepsilon_n)-v_n(t+(k-1)\varepsilon_n)\\
&  \cdot (T_n(t+k\varepsilon_n)-T_n(t+(k-1)\varepsilon_n))| \ge J-  2  C\sigma \bigg\}.
\end{align*}
% \begin{align*}
% &C_n:=\\
% &\left \{ t \in [a,a+\varepsilon_n]: \sum_{k=1}^{N_{\varepsilon_n}}\frac{1}{\varepsilon_n}|v_n(t+k\varepsilon_n)-v_n(t+(k-1)\varepsilon_n) \cdot (T_n(t+k\varepsilon_n)-T_n(t+(k-1)\varepsilon_n))|\ge J-C\eta \right\}.
% \end{align*}
We subdivide the proof in two steps.

\noindent{\em Step 1: } We show that
\begin{equation}\label{eq3.10g}
\lim_{n \to \infty} \frac{|C_n|}{\varepsilon_n} = 1.
\end{equation}
Notice that, by a change of variables,~\eqref{eq3.10g} is equivalent to
\begin{equation}
\label{eq4:10gg}
    \lim_{n\to \infty} |C^{\varepsilon_{n}}_{n}| = 1\,,
\end{equation}
where we have set
\begin{align*}
    C^{\varepsilon_{n}}_{n}:= \bigg\{ \tau \in [0,1]: \sum_{k=1}^{N_{\varepsilon_n}} & \ \frac{1}{\varepsilon_n}|v_n( \varepsilon_{n} \tau+k\varepsilon_n)-v_n( \varepsilon_{n} \tau+(k-1)\varepsilon_n)\\
&  \cdot (T_n(  \varepsilon_{n} \tau +k\varepsilon_n)-T_n(  \varepsilon_{n} \tau +(k-1)\varepsilon_n))| \ge J- 2 C\sigma \bigg\}\,.
\end{align*}

To this end, for all $\delta>0$ we set
\begin{align*}
A_\delta&:=\left \{ t \in [0,\delta]: |v(t)-  v(0)|< \sigma\right \},\\
B_\delta&:=\left \{ t \in [b-\delta,b]: |v(t)-v(b)|< \sigma\right \}.
\end{align*}
By hypothesis {\em (ii)} we have that
\begin{equation}\label{eq3.11g}
\lim_{\delta \to 0} \frac{|A_\delta|}{\delta}= \lim_{\delta \to 0} \frac{|B_\delta|}{\delta}=1.
\end{equation}

By Severini--Egorov Theorem, there exists $I_\delta \subset I$ such that $|I \setminus I_\delta|<\delta^2$, $v_n(t) \to v(t)$ and $T_n(t)\to y+t\xi$ uniformly in $t \in I_\delta$.  We set $c_n(t):=T_n(t)-T_n(t-\varepsilon_n)$ and observe that $c_n/\varepsilon_n \to \xi$ uniformly in $I_\delta$.
Then, there exists $\bar n=\bar n(\sigma,\delta,I_\delta)$ such that for $n\ge \bar n$ it holds
\begin{align}
\label{e:implication}
t' \in B_\delta \cap I_\delta, &  \,t \in A_\delta \cap I_\delta 
 \ \Longrightarrow \ \Big|v_n(t')\cdot \frac{c_n(t')}{\varepsilon_n}-v_n(t) \cdot \frac{c_n(t+\varepsilon_n)}{\varepsilon_n}\Big|\ge J-C\sigma \,.
\end{align}
%where  $C=4(1+|\xi|)+|v(b)|+|v(a)|$.
%where $C=2+2(b-a)|\xi|+|v(b)-v(a)|$. 
Moreover, we have
\begin{equation}\label{eq3.13g}
    |A_\delta \cap I_\delta|\ge |A_\delta|-\delta^2, \quad |B_\delta \cap I_\delta|\ge |B_\delta|-\delta^2.
\end{equation}

% Indeed, it holds
% \begin{align*}
% &\left |(v_n(t)-v_n(t'))\cdot \frac{c_n(t+\varepsilon_n)-c_n(t')}{\varepsilon_n}-(v(b)-v(a))\cdot(t-t')\xi\right | \\
% &\le \left |((v_n(t)-v_n(t'))-(v(t)-v(t')) \cdot \Big(\frac{c_n(t+\varepsilon_n)-c_n(t')}{\varepsilon_n}-(t-t')\xi\Big)\right |\\
% &\quad + \left |((v_n(t)-v_n(t'))-(v(t)-v(t')) \cdot (t-t')\xi\right |\\
% &\quad + \left |((v(t)-v(t'))-(v(b)-v(a)) \cdot ((T_n(t)-T_n(t'))-(t-t')\xi)\right |\\
% &\quad + \left |((v(t)-v(t'))-(v(b)-v(a)) \cdot (t-t')\xi\right |\\
% &\quad + \left |(v(b)-v(a)) \cdot ((T_n(t)-T_n(t'))-(t-t')\xi)\right |\\
% &\le \eta^2+ \eta |t-t'||\xi|+ \eta^2 + \eta |t-t'||\xi|+|v(b)-v(a)| \eta\\
% &\le  (2+2(b-a)|\xi|+|v(b)-v(a)|)\eta=C\eta .
% \end{align*}

Let  $\tau \in [0, 1]$ be such that there exists $M_{n, \tau} \leq N_{\varepsilon_{n}}$ such that $ \varepsilon_{n} \tau \in A_\delta \cap I_\delta$ and $ \varepsilon_{n} \tau +M_{n, \tau}\varepsilon_n \in B_\delta \cap I_\delta$.  
By the triangle inequality and by~\eqref{e:implication} we have
\begin{align}
\label{e:2.8}
        & \liminf_{n \to \infty}  \frac{1}{\varepsilon_n} \sum_{k=1}^{N_{\varepsilon_n}}|(v_n(\varepsilon_{n} \tau +k \varepsilon_n)-v_n(\varepsilon_{n} \tau +(k-1)\varepsilon_n))\cdot (T_n(\varepsilon_{n} \tau  +k\varepsilon_n)-T_n(\varepsilon_{n} \tau +(k-1)\varepsilon_n))| \nonumber
        \\
        & 
        \geq \liminf_{n \to \infty}   \frac{1}{\varepsilon_n} \sum_{k=1}^{M_{n, \tau}} |(v_n(\varepsilon_{n} \tau  +k \varepsilon_n)-v_n(\varepsilon_{n} \tau +(k-1)\varepsilon_n))\cdot (T_n(  \varepsilon_{n} \tau  +k\varepsilon_n)-T_n(  \varepsilon_{n} \tau +(k-1)\varepsilon_n))| \nonumber
        \\
        &
        \geq \liminf_{n \to \infty}  \frac{1}{\varepsilon_n}\bigg |   \sum_{k=1}^{M_{n, \tau} }  (v_n(  \varepsilon_{n} \tau  +k \varepsilon_n)-v_n(   \varepsilon_{n} \tau  +(k-1)\varepsilon_n))\cdot c_n(  \varepsilon_{n} \tau +k \varepsilon_n) \bigg |\nonumber
        \\
        &
        \ge\liminf_{n \to \infty} \frac{1}{\varepsilon_n} \bigg [\Big| v_n(   \varepsilon_{n} \tau  +M_{n, \tau} \varepsilon_n)\cdot c_n(    \varepsilon_{n} \tau  +M_{n, \tau} \varepsilon_n) -v_n(    \varepsilon_{n} \tau  )\cdot c_n(    \varepsilon_{n} \tau  +\varepsilon_n)   \Big |\nonumber 
        \\
        &
        \qquad \qquad \qquad - \bigg | \sum_{k=1}^{M_{n, \tau}-1}  (c_n(   \varepsilon_{n} \tau  +(k+1)\varepsilon_n)-c_n(   \varepsilon_{n} \tau  +k\varepsilon_n))\cdot v_n(  \varepsilon_{n} \tau  +k\varepsilon_n)  \bigg |\bigg] \nonumber\\
        &
        \ge  J - C\sigma  - \limsup_{n \to \infty}  \bigg | \sum_{k=1}^{M_{n, \tau}-1}  \frac{c_n(   \varepsilon_{n} \tau  +(k+1)\varepsilon_n)-c_n(  \varepsilon_{n} \tau  +k\varepsilon_n)}{\varepsilon_n}\cdot v_n(   \varepsilon_{n} \tau  +k\varepsilon_n)  \bigg |. 
\end{align}

We now show that  
\begin{equation}\label{eq:c_nzero}
\begin{split}
     \limsup_{n \to \infty}\bigg | \sum_{k=1}^{M_{n, \tau}-1}   \frac{c_n(   \varepsilon_{n} \tau   +(k+1)\varepsilon_n)-c_n(     \varepsilon_{n} \tau   +k\varepsilon_n)}{\varepsilon_n^2}\cdot v_n(    \varepsilon_{n} \tau   +k\varepsilon_n) \varepsilon_n  \bigg | =0.
\end{split}
\end{equation}
Let us consider the functions   $f_{n,\tau} \colon I \to \R^d$ and $v_{n,\tau} \colon I \to \R^d$   defined by 
\begin{align*}
       f_{n,\tau}   (s) & :=\sum_{k=1}^{N_{\varepsilon_n}-1} \frac{c_n (    \varepsilon_{n} \tau   +(k+1)\varepsilon_n)-c_n(   \varepsilon_{n} \tau   +k\varepsilon_n)}{\varepsilon_n^2} \, {\bf 1}_{[     \varepsilon_{n} \tau   +k\varepsilon_n,     \varepsilon_{n} \tau   +(k+1)\varepsilon_n)}(s)\,,\\
       v_{n,\tau}  (s)& :=\sum_{k=1}^{N_{\varepsilon_n}-1} v_n(   \varepsilon_{n} \tau   +k\varepsilon_n) {\bf 1}_{[   \varepsilon_{n} \tau  +k\varepsilon_n,    \varepsilon_{n} \tau   +(k+1)\varepsilon_n)}(s)
\end{align*} 
for $s \in I $. 
We remark that  by \eqref{e:T-sup-L}   the functions   $f_{n,\tau}$   satisfy a uniform $L^\infty$-bound.
Consider $  0<   c<d<b$ and let $n_{\varepsilon_n}^c:=[  c   /\varepsilon_n]$, $N_{\varepsilon_n}^d:=[  d   /\varepsilon_n]$. We first show that
\begin{equation}
\label{eq:intf_ntzero}
\lim_{n \to \infty} \Big | \int_c^d   f_{n,\tau}   (s) \,\de s\Big |=0 .
\end{equation}
Indeed, for $w \in \R^d$ it holds
\begin{align*}
    0 &= \limsup_{n \to \infty} \Big | \frac{1}{\varepsilon_n} \sum_{k=n_{\varepsilon_n}^c+1}^{N_{\varepsilon_n}^d} (w-w) \cdot (T_n(    \varepsilon_{n} \tau   +k\varepsilon_n)-T_n(    \varepsilon_{n} \tau   +(k-1)\varepsilon_n)) \Big |\\
    &  \geq   \limsup_{n \to \infty} \frac{1}{\varepsilon_n} \bigg [-\Big| w\cdot c_n(    \varepsilon_{n} \tau   +N_{\varepsilon_n}^d \varepsilon_n) -w\cdot c_n(    \varepsilon_{n} \tau   + n_{\varepsilon_n}^c   \varepsilon_n)   \Big | \\
    &\qquad \qquad+ \Big |\sum_{k=n_{\varepsilon_n}^c+1}^{N_{\varepsilon_n}^d-1} (c_n(     \varepsilon_{n} \tau   +(k+1)\varepsilon_n)-c_n(    \varepsilon_{n} \tau   +k\varepsilon_n))\cdot w  \Big |\bigg]\\
    &= \limsup_{n \to \infty} \Big |\sum_{k=n_{\varepsilon_n}^c+1}^{N_{\varepsilon_n}^d-1} \frac{(c_n(     \varepsilon_{n} \tau   +(k+1)\varepsilon_n)-c_n(   \varepsilon_{n} \tau   +k\varepsilon_n))}{\varepsilon_n^2}\cdot w  \varepsilon_n \Big | \\
    &= \limsup_{n \to \infty} \Big |   \int_{ (n^{c}_{\varepsilon_{n}} + 1 + \tau)\varepsilon_n}^{(N^{d}_{\varepsilon_{n}} + \tau) \varepsilon_{n}}   w \cdot   f_{n,\tau} (s)\, \de s\Big | .
\end{align*}
This proves \eqref{eq:intf_ntzero}.
Combining \eqref{eq:intf_ntzero} together with \eqref{e:T-limit-0} we deduce   $f_{n,\tau}\rightharpoonup 0$   weakly$^*$ in $L^\infty(I; \R^{d})$.
This in turn implies \eqref{eq:c_nzero} after showing   $v_{n,\tau} \to v$   strongly in $L^1(I; \R^{d})$ for a.e.   $\tau \in [0, 1]$.   To this end, we consider
% \begin{align*}
%     &\fint_a^{a+\varepsilon} \Big (\int_a^b |v_{n,t}(s)-v(s)| \,\de s \Big) \de t \\
%     &= \fint_a^{a+\varepsilon} \Big (\int_a^b\Big | \sum_{k}  1_{[t+k\varepsilon_n, t+(k+1)\varepsilon_n)}(s) (v_n(t+k\varepsilon_n)- v(s)) \Big|\,\de s \Big ) \de t\\
%     &= \sum_k\fint_a^{a+\varepsilon} \Big (\int_{t+k\varepsilon_n}^{t+(k+1)\varepsilon_n} |  (v_n(t+k\varepsilon_n)- v(s)) |\,\de s \Big ) \de t\\
%     &=\sum_k\fint_a^{a+\varepsilon} \Big (\int_{t}^{t+\varepsilon_n} |  (v_n(t+k\varepsilon_n)- v(s+k\varepsilon_n)) |\,\de s \Big ) \de t\\
%     &\le \sum_k \fint_a^{a+\varepsilon_n} \Big (\int_t^{t+\varepsilon_n} |v_n(t+k \varepsilon_n)-v(t+k\varepsilon_n)|\, \de s\Big )\de t\\
%     &\quad + \sum_k \fint_a^{a+\varepsilon_n} \Big (\int_t^{t+\varepsilon_n} |v(t+k \varepsilon_n)-v(s+k\varepsilon_n)|\,\de s \Big )\de t\\
%     &\le \int_a^b |v_n(t)-v(t)| \,\de t  + \varepsilon_n \sum_k \fint_a^{a+\varepsilon_n} \Big (\fint_t^{t+\varepsilon_n} |v(t+k \varepsilon_n)-v(s+k\varepsilon_n)|\,\de s \Big )\de t
% \end{align*}
\begin{align*}
      \int_0^{1}   & \Big (\int_0^b |  v_{n,\tau}  (s)-v(s)| \,\de s \Big) \de  \tau   \\
    &=   \int_0^{1}   \Big (\int_0^b\Big | \sum_{k=1}^{N_{\varepsilon_n}-1}  {\bf 1}_{[   \varepsilon_{n} \tau   +k\varepsilon_n,   \varepsilon_{n} \tau   +(k+1)\varepsilon_n)}(s) (v_n(  \varepsilon_{n} \tau   +k\varepsilon_n)- v(s)) \Big|\,\de s \Big ) \de   \tau   \\
    &= \sum_{k=1}^{N_{\varepsilon_n}-1}   \int_0^{1}  \Big (\int_{  \varepsilon_{n} \tau  +k\varepsilon_n}^{  \varepsilon_{n} \tau  +(k+1)\varepsilon_n}   |  v_n(  \varepsilon_{n} \tau   +k\varepsilon_n)- v(s) |\,\de s \Big ) \de   \tau   \\
    &=\sum_{k=1}^{N_{\varepsilon_n}-1} \frac{1}{\varepsilon_n} \int_{k\varepsilon_n}^{(k+1)\varepsilon_n} \Big (\int_{0}^{\varepsilon_n} |  v_n(t)- v(t+h) |\,\de h \Big ) \de t\\
    &\le \int_0^b \Big (\fint_0^{\varepsilon_n} |  v_n(t)- v(t+h) |\,\de h \Big ) \de t\\
    & \le \int_0^b |  v_n(t)- v(t) |\, \de t +  \int_0^b \Big (\fint_0^{\varepsilon_n} |  v(t)- v(t+h) |\,\de h  \Big ) \de t\,.
\end{align*}
The first term on the right-hand side tends to zero since $v_n \to v$ in $L^1(I;\R^d)$. The second term tends to zero by  standard convolution estimates, since $v \in L^{1} (I; \R^{d})$. 
%the dominated convergence theorem since $\fint_0^{\varepsilon_n} |v(t)-v(t+h)|\, \de h \le g_n(t):=|v(t)|+ \fint_0^{\varepsilon_n} |v(t+h)| \, \de h \to 2|v(t)|$ for a.e. $t \in I$ and $v \in L^1(I;\R^d)$.
Hence, 
\begin{align*}
    \lim_{n \to \infty}   \int_0^{1}   \Big (\int_0^b |   v_{n,\tau}   (s) - v(s) | \,\de s \Big) \de   \tau   =  0\,,
\end{align*}
which implies the claim. Therefore, \eqref{eq:c_nzero} is proved. Combining~\eqref{e:2.8} and~\eqref{eq:c_nzero} we infer that   for $\tau \in [0, 1]$ such that there exists $M_{n, \tau} \leq N_{\varepsilon_{n}}$ with  $ \varepsilon_{n} \tau + M_{n, \tau} \varepsilon_n \in B_\delta \cap I_\delta$ and $ \varepsilon_{n} \tau \in A_\delta \cap I_\delta$  
\begin{align}
\label{eq:scdis}
        \liminf_{n \to \infty} \frac{1}{\varepsilon_n} & \sum_{k=1}^{M_{n, \tau}}|(v_n(  \varepsilon_{n} \tau   +k \varepsilon_n)-v_n(    \varepsilon_{n} \tau   +(k-1)\varepsilon_n))\cdot (T_n(    \varepsilon_{n} \tau    +k\varepsilon_n)-T_n(    \varepsilon_{n} \tau    +(k-1)\varepsilon_n))| \nonumber
        \\
        & \vphantom{ \sum_{k=1}^{N_{\varepsilon_n}}} \ge  J - C \sigma \,.
\end{align}

We now proceed as in \cite[Lemma 3.2]{Gobbino}. 
Let us set
\begin{align*}
    K_n^0&:= \big\{k \in \N: [k\varepsilon_n, (k+1)\varepsilon_n]\subset [0,\delta] \big\},\\
    K_n^b&:= \big\{k \in \N: [k\varepsilon_n, (k+1)\varepsilon_n]\subset [b-\delta,b] \big\}.
\end{align*}
It is easy to see that
\begin{equation}\label{eq3.14g}
    \# K_n^0=[\delta/\varepsilon_n], \quad \#K_n^b\ge ([\delta/\varepsilon_n]-1).
\end{equation}
Let us further set
\begin{align}
      A_\delta^{n, k}   &:= \big\{  \tau \in [0, 1] :\,  \varepsilon_{n} \tau   +k\varepsilon_n\notin A_\delta\cap I_\delta\big\} \qquad \text{  for $ k \in K_n^0$  } ,\label{eq3.15g}\\
      B_\delta^{n, k}  &:= \big\{  \tau \in [0, 1]: \,   \varepsilon_{n} \tau   +k\varepsilon_n\notin B_\delta\cap I_\delta \big\} \qquad \text{  for $ k \in K_n^b$  }.\label{eq3.16g}
\end{align}
By \eqref{eq3.15g}, \eqref{eq3.16g} and \eqref{eq:scdis} it follows that
  
\begin{align}\label{eq3.17g}
    \tau \in [0, 1] \setminus & \bigg(\bigcup_{k \in K_{n}^{0}} A_\delta^{n,k}\cup \bigcup_{k \in K_{n}^{b}} B_\delta^{n,k} \bigg) 
    \\
    &
    \nonumber \vphantom{\bigcup_{k \in K}} \Longrightarrow \, \tau \in C^{\varepsilon_{n}}_n \text{ for $n$ sufficiently large (depending on~$\tau$ and~$\delta$).}
\end{align}  
  By \eqref{eq3.13g} we have that
%\begin{align*}
 %   &\bigcup_{k \in K_n^a} \big( a + \varepsilon_{n} A_\delta^{n, k}+k\varepsilon_n \big)  \subset [a,a+\delta]\setminus (A_\delta \cap I_\delta),\\
  %  &\bigcup_{k \in K_n^b} \big( a + \varepsilon_{n} B_\delta^n+k\varepsilon_n \big)\subset [b-\delta,b]\setminus (B_\delta \cap I_\delta),
%\end{align*}
%and the above unions are disjoint, by  and \eqref{eq3.14g} it follows that
\begin{align*}
    | A_\delta^{n, k}|&\le \frac{1}{\varepsilon_{n}}(\delta-|A_\delta|+\delta^2) \qquad \text{for $k \in K_{n}^{0}$},\\
    |B_\delta^{n, k}|&\le \frac{1}{\varepsilon_{n}}(\delta-|B_\delta|+\delta^2) \qquad \text{for $k \in K_{n}^{b}$}.
\end{align*}
Hence, by \eqref{eq3.17g}
\begin{align*}
\liminf_{n \to \infty} |C^{\varepsilon_{n}}_n| & \ge  \liminf_{n \to \infty} \bigg| [0, 1] \setminus  \bigg(\bigcup_{k \in K_{n}^{0}} A_\delta^n\cup \bigcup_{k \in K_{n}^{b}} B_\delta^n \bigg) \bigg| - \bigg|\bigg[ [0, 1] \setminus  \bigg(\bigcup_{k \in K_{n}^{0}} A_\delta^n\cup \bigcup_{k \in K_{n}^{b}} B_\delta^n \bigg)\bigg] \setminus C_{n}^{\varepsilon_{n}} \bigg| 
\\
&
\geq 1 - \sum_{k \in K^{0}_{n}} |A^{n, k}_{\delta}| - \sum_{k \in K^{b}_{n}} |B^{n, k}_{\delta}|
\\
&
\geq 1 - \frac{1}{\varepsilon_n}(\delta-|A_\delta|+\delta^2) \Big [\frac{\delta}{\varepsilon_n} \Big ]^{-1}- \frac{1}{\varepsilon_{n}}(\delta-|B_\delta|+\delta^2)\left (\Big [\frac{\delta}{\varepsilon_n} \Big ]-1\right )^{-1} 
\\
&
= 1 - \bigg( 1 - \frac{|A_{\delta}|}{\delta} + \delta \bigg) - \bigg( 1 - \frac{|B_{\delta}|}{\delta} + \delta \bigg)
\\
&
= \frac{|A_{\delta}|}{\delta}  + \frac{|B_{\delta}|}{\delta}  -1-2\delta\,.
\end{align*} 
% Dividing by $\varepsilon_n$ and taking the liminf as $n \to \infty$, we get
%\[
%\liminf_{n \to \infty}\frac{|C_n|}{\varepsilon_n}\ge 1-\frac{1}{\delta}(2\delta+2\delta^2-|A_\delta|-|B_\delta|).
%\]
Since $\delta>0$ is arbitrary, we can send $\delta \to 0^+$ and by \eqref{eq3.11g} we obtain
\begin{align*}
    \liminf_{n\to\infty} |C^{\varepsilon_{n}}_{n}| \geq 1\,.
\end{align*}
Since $|C^{\varepsilon_{n}}_n|\le 1$, it must be
\[
\lim_{n\to \infty} \, |C^{\varepsilon_{n}}_n| = 1\,.
\]
Thus, equalities \eqref{eq4:10gg} and \eqref{eq3.10g} are proved.

\noindent{\em Step 2: } Let us prove \eqref{eq3.9g}. By definition of $C_n$ we have
\begin{align*}
    &F_n(v_n, 1 , I)\ge F_n(v_n, 1, [0,\varepsilon_nN_{\varepsilon_n}])\\
    &=\frac{1}{\varepsilon_n^2}\int_0^{\varepsilon_nN_{\varepsilon_n}} \big |(v_n (t+\varepsilon_n)-v_n(t) \cdot (T_n(t+\varepsilon_n)-T_n(t))\big |  \, \de t\\
    &=\frac{1}{\varepsilon_n^2}\int_0^{\varepsilon_n} \sum_{k=1}^{N_{\varepsilon_n}}\big |(v_n (t+k\varepsilon_n)-v_n(t+(k-1)\varepsilon_n) \cdot (T_n(t+k\varepsilon_n)-T_n(t+(k-1)\varepsilon_n))\big | \, \de t \\
    &\ge\frac{|C_n|}{\varepsilon_n} (J- \textcolor{blue}{2}C\sigma).
\end{align*}
By sending $ n \to \infty$ and then $\sigma \to 0$,  we deduce
\[
\liminf_{n \to \infty} F_n(v_n, 1 , I) \ge J.
\]
This concludes the proof of~\eqref{eq3.9g} and of the lemma.
\end{proof}

For the proof of the liminf we need the following technical lemma which is a generalization of \cite[Lemma 3.3]{Gobbino}.

\begin{lemma}\label{lemma:3.3gob}
    Let $u \in L^1_{loc}(\R)$. Then there exists $a \in \R$ such that
    \begin{itemize}
        \item[(i)] $a+q$ is a Lebesgue point of $u$ for every $q \in \Q$;
        \item[(ii)] every sequence $\{u_n\}_{n \in\N}\subset L^1_{loc}(\R)$ that satisfies the conditions:
        \begin{itemize}
            \item $u_n\big(a+\frac{z}{n}\big)=u\big(a+\frac{z}{n} \big)$ for all $z \in \Z$,
            \item if $x \in \big [a+\frac{z}{n},a+\frac{z+1}{n}\big]$, then $u_n(x)$ belongs to the interval with endpoints $u\big ( a+\frac{z}{n}\big)$ and $u\big ( a+\frac{z+1}{n}\big)$,
        \end{itemize}
        has a subsequence converging to $u$ in $L^1_{loc}(\R)$.
    \end{itemize}
\end{lemma}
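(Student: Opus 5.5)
The plan is the averaging argument of \cite[Lemma 3.3]{Gobbino}: show that a suitable family of piecewise interpolants converges on average over the shift $a$, and then extract one good $a$. Since $L^1_{loc}$ convergence is metrizable via the exhaustion $[-R,R]$, $R\in\N$, it suffices to work on a fixed interval $[-R,R]$ and diagonalize at the end.

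\emph{Condition (i).} The set $\mathcal N$ of non-Lebesgue points of $u$ is Lebesgue-null. Hence $\bigcup_{q\in\Q}(\mathcal N - q)$ is null, and every $a$ outside this set satisfies (i). This restriction costs nothing, because the good $a$ in the next step only needs to be found on a set of positive measure.

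\emph{Key estimate.} For $z\in\Z$ put $x_z=a+z/n$. By the two conditions in (ii), $u_n(x)$ lies between $u(x_z)$ and $u(x_{z+1})$ for $x\in[x_z,x_{z+1}]$. Therefore
\begin{equation*}
|u_n(x)-u(x)|\le |u(x_z)-u(x)|+|u(x_{z+1})-u(x)|.
\end{equation*}
Define
\begin{equation*}
g_n(a):=\int_{-R}^{R}\big(|u(x_{z(x)})-u(x)|+|u(x_{z(x)+1})-u(x)|\big)\,\de x,
\end{equation*}
where $z(x)$ is the index with $x\in[x_z,x_{z+1})$. This majorant does not depend on the particular sequence $u_n$. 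The main step is to prove $\int_0^1 g_n(a)\,\de a\to0$. The substitution $a\mapsto a+1/n$ leaves $g_n$ invariant, so $g_n$ is $1/n$-periodic in $a$. Consequently
\begin{equation*}
\int_0^1 g_n(a)\,\de a=n\int_0^{1/n}g_n(a)\,\de a.
\end{equation*}
Write $x=x_z+h$ with $h\in[0,1/n)$. By Fubini and a change of variables, as $a$ ranges over $[0,1/n)$ and $z$ over $\Z$, the point $x_z$ sweeps the line exactly once, and we obtain
\begin{equation*}
\int_0^1 g_n\,\de a\le \int_{-R-1}^{R+1}\Big(\frac1{1/n}\int_0^{1/n}|u(s)-u(s+h)|\,\de h\Big)\de s+\text{(analogous term with $h\in(-1/n,0]$)}.
\end{equation*}
This tends to $0$ by continuity of translations in $L^1$, exactly as in the estimate in the proof of Lemma~\ref{lemma:gob3.2}. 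Handling this Fubini change of variables with the discrete index $z$ is the only delicate bookkeeping, and I expect it to be the main obstacle.

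\emph{Choosing $a$.} Since $g_n\ge0$ and $g_n\to0$ in $L^1(0,1)$, a subsequence $n_j$ satisfies $g_{n_j}(a)\to0$ for a.e.\ $a\in(0,1)$. A diagonal argument over $R\in\N$ gives a single subsequence for which $g_{n_j}(a)\to0$ for all $R$, for a.e.\ $a$. Pick such an $a$ that also satisfies (i). Then every sequence $u_n$ obeying the hypotheses of (ii) satisfies
\begin{equation*}
\int_{-R}^{R}|u_{n_j}-u|\,\de x\le g_{n_j}(a)\to0
\end{equation*}
for every $R$, so $u_{n_j}\to u$ in $L^1_{loc}(\R)$. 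Note that the subsequence depends only on $u$ and $a$, not on $u_n$, which is slightly stronger than what (ii) requires.
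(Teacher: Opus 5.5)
Your proposal is correct and follows essentially the same route as the paper. The paper shows $\int_I\int_0^1|v_n^a-u|\,\de a\,\de x\to 0$ for the step interpolant $v_n^a(x)=u(a+[n(x-a)]/n)$ via continuity of translations, then defers the rest to \cite[Lemma 3.3]{Gobbino}; your majorant $g_n$ (which also controls the right-endpoint term $u(x_{z+1})$), the diagonal argument over $R\in\N$, and the final choice of a single $a$ satisfying (i) make explicit the steps the paper leaves to that reference.
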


\begin{proof}
    We only need to show the validity of \textit{(ii)}, since \textit{(i)} is trivially true.

    We use the same notation of \cite[Lemma 3.3]{Gobbino}. For $n \ge 1$, $z \in \Z$, and $a \in [0,1]$, we define for every $x \in \R$
    \[
    v_n^a(x)=u \Big (a+\frac{[n(x-a)]}{n} \Big ).
    \]

    For every fixed interval $I \subset \R$, up to a subsequence, we have
    \[
    v_n^a \to u \quad \text{in } L^1(I) \text{ for a.e. } a \in [0,1].
    \]
    Indeed, it holds
    \[
    \lim_{n \to \infty} \int_I\int_0^1|v_n^a(x)-u(x)| \, \de a \, \de x = \lim_{n \to \infty}  \int_I\fint_{-\frac{1}{n}}^0 |u(x+y)-u(x)| \, \de y \, \de x = 0,
    \]
    by the dominated convergence theorem since $u \in L^1(\R)$.

    The proof then follows in the same way as in \cite[Lemma 3.3]{Gobbino}. 
\end{proof}

We are now in a position to prove Theorem~\ref{thm:liminf}.

\begin{proof}[Proof of Theorem \ref{thm:liminf}]
    Without loss of generality we can assume 
    \begin{equation}
    \label{e:closure0}
    \liminf_{n \to \infty} GTV_{n}(u_n)<\infty.
    \end{equation}
    We define $v_n:=u_n \circ T_n$. Since $(u_n, \nu_{n}) \to (u, \nu)$ in $TL^1$, it holds $v_n \to u$ in $L^1(D;\R^d)$.
    
     Let us recall that, in view of Remark \ref{rmk:1}, we can reduce to the case of $\eta$ of the form $\eta(t)=c$ for $t<b$ and $\eta(t) =0$ for $t\ge b$. Furthermore, we  only have to prove the lower semicontinuity of the one-dimensional functional \eqref{eq:1dfun}. Indeed, 
    we have
    \begin{align}
    \label{e:2000}
        \liminf_{n \to \infty} \frac{1}{\varepsilon_n^2} &  \iint_{D \times D} \eta_{\varepsilon_n}(x-y)|(v_n(x)-v_n(y))\cdot (T_n(x)-T_n(y))|\rho(x)\rho(y) \, \de x \de y
        \\
        &
        =\liminf_{n \to \infty} \int_{\frac{D-D}{\varepsilon_n}} F_n^\xi(v_n,D)\eta(\xi) \,\de \xi \nonumber
        \\
        &
        \ge \int_{\R^d}  \liminf_{n \to \infty} \Big (\int_{\Pi^\xi} F_n^{\xi,y}(   v^{\xi, y}_n   ,\rho, (D\cap (D-\varepsilon \xi))_y^\xi)\, \de \mathcal{H}^{d-1}(y) \Big )|\xi| \eta(\xi) \, \de\xi \nonumber
        \\
        &
        \ge \int_{\R^d}  \int_{\Pi^\xi}  \liminf_{n \to \infty} F_n^{\xi,y}(   v^{\xi, y}_n    ,\rho, (D\cap (D-\varepsilon \xi))_y^\xi) |\xi| \eta(\xi) \, \de \mathcal{H}^{d-1}(y)  \de\xi. \nonumber
    \end{align}

Recall that, for any function $v\colon D \to \R^d$, we have set, for $\xi \in \R^{d}$ and $y \in \Pi^\xi$, $D^\xi:=\{t \in \R: y+t\xi \in D\}$ and $v^{\xi,y}(t):=v(y+t\xi)$ for $t \in D_{y}^{\xi}$. Since $v_{n} \to v$ in $L^{1} (D; \R^{d})$, we have that $v_{n}^{\xi, y} \to v^{\xi, y}$ in $L^{1} (  D^{\xi}_{y}   ; \R^{d})$ for a.e.~$\xi \in \R^{d}$ and $\mathcal{H}^{d-1}$-a.e.~$y \in \Pi^{\xi}$. By arguing as in  \cite{Gobbino, AlmiDavoliKubinTasso}, we consider $j \in \N$, $a \in \R$ given by Lemma \ref{lemma:3.3gob} applied to the limit function $   \hat{u}^{\xi}_{y} :=   u^{\xi,y} \cdot \xi$, and $I^z_j=\Big [a+\frac{z}{j}, a + \frac{z+1}{j} \Big]$ for every $z \in \Z$ such that $I_j^z \subset D_y^\xi$. We also define 
\[
\rho^{\xi,y}[I_j^z]:=\min_{t \in I_j^z} \rho^{\xi,y}(t)\,.
\]
  For every $K \Subset D^{\xi}_{y}$ let   and $w^{\xi,y}_j  \colon   K   \to \R^d$ to be the piecewise affine function interpolating between $u^{\xi,y} (a+\frac{z}{j}  )$ and $u^{\xi,y} (a+\frac{z}{j}  )$ in $I_j^z$.
By Lemma \ref{lemma:3.3gob} we have, up to a subsequence, $  w^{\xi,y}_j  \cdot \xi \to    \hat{u}^{\xi}_{y}   $ in   $L^1(K).$ Let us further denote by $Z_{j, K}:=\{ z \in \mathbb{Z}: I^{z}_{j} \cap K \neq \emptyset\}$. Notice that for $n$ sufficiently large, $\bigcup_{z \in Z_{j, K}} I^{z}_{j} \subseteq (D \cap (D - \varepsilon_{n}\xi))^{\xi}_{y}$.

We have the following estimates
    \begin{align}
    \label{eq:liminf1d}
\liminf_{n \to \infty} \, & F^{\xi,y}_{n}(   v^{\xi, y}_n   ,\rho, (D \cap (D -   \varepsilon_{n}   \xi))_y^\xi)
% &=\liminf_{n \to \infty}\frac{1}{\varepsilon_n^2} \int_{I}  \big |(v_n^{\xi,y} (t+\varepsilon_n)-v_n^{\xi,y}(t)) \cdot (T_n^{\xi,y}(t+\varepsilon_n)-T_n^{\xi,y}(t))\big | \rho^{\xi,y}(t+\varepsilon_n) \rho^{\xi,y}(t) \, \de t\\
\ge \liminf_{n \to \infty}   \sum_{z \in Z_{j, K}}   F^{\xi,y}_{n}(v_n,\rho, I_j^z)\\
&\ge   \sum_{z \in Z_{j, K}}   \liminf_{n \to \infty}  F^{\xi,y}_{n}(   v^{\xi, y}_n   ,\rho, I_j^z) \ge   \sum_{z \in Z_{j, K}}    \big(\rho^{\xi,y}[I_j^z]\big)^2 \liminf_{n \to \infty}  F^{\xi,y}_{n}(   v^{\xi, y}_n   ,1, I_j^z) \nonumber
\\
&
\hspace{-0.56cm}\stackrel{\textnormal{Lemma \ref{lemma:gob3.2}}}{\ge}   \sum_{z \in Z_{j, K}}    \big( \rho^{\xi,y}[I_j^z]\big)^2 \Big|\Big[u^{\xi,y}\Big ( a+\frac{z+1}{j}\Big)- u^{\xi,y}\Big ( a+\frac{z}{j}\Big) \Big] \cdot \xi \Big|\nonumber
\\
&
  \geq \int_{ K}    |D w^{\xi,y}_j  (t) \cdot \xi| \sum_{z}  \big(\rho^{\xi,y}[I_j^z]\big)^2 \, {\bf 1}_{I_j^z}(t) \, \de t. \nonumber
\end{align}
Since, by assumption $(\rho1)$, $\rho$ is a continuous function  we infer
\begin{align*}
       \sum_{z \in Z_{j, K}}    \big(\rho^{\xi,y}[I_j^z]\big)^2 \, {\bf 1}_{I_j^z} \to (\rho^{\xi,y})^2 \quad \text{uniformly in $K$ as $j \to \infty$}.
\end{align*}
Therefore, using also $(\rho2)$, by the dominated convergence theorem we get for $j \to \infty$
\begin{align}
\label{e:closure2}     
    \liminf_{j \to \infty} &   \int_{K}   |D  w^{\xi,y}_j  (t) \cdot \xi|   \sum_{z \in Z_{j, K}}    \big(\rho^{\xi,y}[I_j^z]\big)^2 \,{\bf 1}_{I_j^z}(t) \, \de t 
    \\
    &
    \geq \liminf_{j \to \infty}   \int_{K}    |D  w^{\xi,y}_j  (t)\cdot \xi| (\rho^{\xi,y}(t))^2  \, \de t \nonumber
    \\
    &
    - \limsup_{j \to \infty}   \int_{K }   |D  w^{\xi,y}_j   (t) \cdot \xi| \,  \bigg|   \sum_{z \in Z_{j, K}}     \big(\rho^{\xi,y}[I_j^z]\big)^2 \, {\bf 1}_{I_j^z}(t) -(\rho^{\xi,y}(t))^2\bigg|  \, \de t \nonumber
     \\ 
    % &=\liminf_{j \to \infty}\int_{ \Omega_y^{\xi/|\xi|}}  |D\bar u^{\xi/|\xi|,y}_j(s) \cdot \xi/|\xi||  \rho^{\xi/|\xi|,y}(s)^2\, \de s\\
    &
     = \liminf_{j \to \infty}   \int_{K}   |D  w^{\xi,y}_j   (t)\cdot \xi| (\rho^{\xi,y}(t))^2  \, \de t \nonumber
    \\
    &
     =  \liminf_{j \to \infty}   \int_{K_{|\xi|}}   \bigg| D  w^{ \xi/|\xi | , y }_j   (t)\cdot \frac{\xi}{|\xi|} \bigg| (\rho^{\xi/|\xi | , y }(t))^2 |\xi|\, \de t \,, \nonumber
    \end{align}
      where we have set $K_{|\xi|}:=\{ t \in \R: t/|\xi| \in K\} \Subset D^{\xi/|\xi|}_{y}$.   Furthermore, $(\rho2)$ also implies that
\begin{align}
\label{e:closure}    
     \liminf_{j \to \infty} &  \int_{K}  |D  w^{\xi,y}_j  (t) \cdot \xi|  \sum_{z \in Z_{j, K} }   \big(\rho^{\xi,y}[I_j^z]\big)^2 \,{\bf 1}_{I_j^z}(t) \, \de t
     \\
     &
     \nonumber \geq    \liminf_{j \to \infty}  \int_{K_{|\xi|}}  \alpha^{2} \bigg| D  w^{ \xi/|\xi | , y }_j   (t)\cdot \frac{\xi}{|\xi|} \bigg| \,|\xi| \, \de t  \,. 
\end{align} 
  By standard lower semicontinuity of the total variation and by the convergence $w^{ \xi/|\xi | , y }_j  \cdot \frac{\xi}{|\xi|} \to \hat{u}^{\xi/|\xi|}_{y}$ in $L^{1} ( K_{|\xi|}  )$,  from~\eqref{e:closure}, we deduce that for a.e.~$\xi \in \R^{d}\setminus\{0\}$ and $\mathcal{H}^{d-1}$-a.e.~$y \in \Pi^{\xi}$ we have
\begin{align}
\label{e:closure3}
  \liminf_{j \to \infty} &  \int_{K}  |D  w^{\xi,y}_j  (t) \cdot \xi|  \sum_{z\in Z_{j, K}}   \big(\rho^{\xi,y}[I_j^z]\big)^2 \,{\bf 1}_{I_j^z}(t) \, \de t \geq  \alpha^{2} |\xi|  \big| D \hat{u}^{ \xi/|\xi | , y }    \big| ( K_{|\xi|} ) \,.
\end{align}
The combination of~\eqref{e:closure3} with~\eqref{e:closure0} and~\eqref{eq:liminf1d} yields
\begin{align*}
    \liminf_{n \to \infty} F^{\xi,y}_n(v^{\xi, y}_n,\rho,  (D \cap (D -  \varepsilon_{n} \xi))_y^\xi  ) \ge  \alpha^{2} |\xi|  \big| D \hat{u}^{ \xi/|\xi | , y }    \big| ( K_{|\xi|} )
\end{align*}
for every $K \Subset D^{\xi}_{y}$. Taking the limit as $K \nearrow D^{\xi}_{y}$ we have that $K_{|\xi|} \nearrow D^{\xi/|\xi|}_{y}$ and
\begin{align*}
    \liminf_{n \to \infty} F^{\xi,y}_n(v^{\xi, y}_n,\rho,  (D \cap (D -  \varepsilon_{n} \xi))_y^\xi  ) \ge  \alpha^{2} |\xi|  \big| D \hat{u}^{ \xi/|\xi | , y }    \big| ( D^{\xi/|\xi|}_{y} )\,.
\end{align*}
Thus, we infer that $u \in BD(D)$ (cf.~\cite{Ambrosio-Coscia-DalMaso}).  In a similar way,~\eqref{eq:liminf1d}--\eqref{e:closure2}  imply that
\begin{equation}
\label{e:2001}
    \liminf_{n \to \infty} F^{\xi,y}_n(v^{\xi, y}_n,\rho, (D \cap (D -  \varepsilon_{n} \xi))_y^\xi) \ge \int_{D_y^{\xi/|\xi|}} \rho^{\xi/|\xi| ,y}(t)^2 \,|\xi| \,  \de | \hat{u}^{\xi/|\xi|,y}| (t)\,.
\end{equation}
From \eqref{e:2000} and~\eqref{e:2001} we get that
    \begin{align}
    \label{e:almost-done}
        \liminf_{n \to \infty} \frac{1}{\varepsilon_n^2} & \iint_{D \times D} \eta_{\varepsilon_n}(x-y)|(v_n(x)-v_n(y))\cdot (T_n(x)-T_n(y))|\rho(x)\rho(y) \, \de x \de y
        \\
        &
        \ge \int_{\R^d} \int_{\Pi^\xi} \bigg(  \int_{D_y^{\xi/|\xi|}} \rho^{\xi/|\xi| ,y}(t)^2 \,  \de | \hat{u}^{\xi/|\xi|,y}| (t) \bigg) |\xi|^{2} \eta(\xi) \, \de \mathcal{H}^{d-1} (y) \, \de \xi \nonumber
        \\
        &
        = \int_{\R^d}\bigg(\int_{D} \rho(x)^2 \,\de\Big|Eu(x) \, \frac{\xi}{|\xi|}\cdot \frac{\xi}{|\xi|}\Big| \bigg) |\xi|^2\eta(\xi) \,  \de \xi \nonumber
        \\
        &
        =\int_{ D  }\bigg(\int_{\R^d}  \eta(\xi)  \Big|\frac{Eu(x)}{|Eu(x)|} \, \xi \cdot \xi\Big| \,\de \xi\bigg)  \rho(x)^2\,  \de |Eu(x)|. \nonumber
    \end{align} 
    Recalling the norm on symmetric  matrices $\phi_{\eta} \colon \mathbb{M}^{d}_{sym} \to [0,\infty)$ defined as
    \[
    \phi_{\eta}(A)= \int_{\R^d} \eta(\xi) |A\xi \cdot \xi|\, \de \xi\,,
    \]
    we rewrite~\eqref{e:almost-done} as
    \begin{align*}
        \liminf_{n \to \infty} \frac{1}{\varepsilon_n^2} & \iint_{D \times D} \eta_{\varepsilon_n}(x-y)|(v_n(x)-v_n(y))\cdot (T_n(x)-T_n(y))|\rho(x)\rho(y)  \de x \de y\\
        &=\int_{D }\rho(x)^2\phi_{\eta}\bigg ( \frac{Eu(x)}{|Eu(x)|}\bigg ) \,  \de |Eu(x)|=TV_{\eta}(u;\rho^2)\,.
    \end{align*}
    This concludes the proof of the liminf inequality.
\end{proof}

\section{Construction of a recovery sequence}

In this section we establish the limsup inequality.

\begin{theorem}\label{thm:limsup}
    Assume $({\rm K1})$--$({\rm K3})$, $(\rho1)$--$(\rho2)$, and~\eqref{eq:asssumption_epsilon}. For every 
    $u \in L^1(D;\R^d;\nu)$ there exists $\{u_n\}_{n \in \N} \subset L^1(\{X_1,\ldots, X_m\};\R^d;\nu_n)$ such that $(\nu_n,u_n) \to (\nu,u)$ in $TL^1$ and
    \begin{equation}
    \label{e:limsup-ineq}
        \limsup_{n \to \infty} GTV_{n}(u_{\varepsilon_n}) \le TV_\eta(u;\rho^2) \,.
    \end{equation}
\end{theorem}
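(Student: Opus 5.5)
We follow the standard density-plus-diagonal strategy of \cite{Trillos-Slepcev}. First, if $TV_\eta(u;\rho^2)=+\infty$ (in particular if $u\notin BD(D)$) there is nothing to prove: it suffices to take $u_n:=u$ evaluated through any map, e.g.\ $u_n(X_i)$ defined by averaging $u$ over the Voronoi-type cells $T_n^{-1}(X_i)$, which converges in $TL^1$. So we assume $u\in BD(D)$.

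\textbf{Step 1: smooth functions.} For $u\in C^2(\overline{D};\R^d)$ (or Lipschitz with $Eu$ continuous) we take $u_n:=u|_{\{X_1,\dots,X_n\}}$. Then $u_n\circ T_n=u\circ T_n\to u$ uniformly by \eqref{eq:id-tntozero}, so $(\nu_n,u_n)\to(\nu,u)$ in $TL^1$ by Proposition~\ref{p:TL1-convergence}. Writing $GTV_n(u_n)$ as in \eqref{eq:GTV} and using a Taylor expansion, $(u(T_n x)-u(T_n y))\cdot(T_n x-T_n y)=Eu(T_n x)(T_nx-T_ny)\cdot(T_nx-T_ny)+O(|T_nx-T_ny|^3)$. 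To handle the kernel evaluated at $T_n(x)-T_n(y)$, we reduce by (K2) and \eqref{eq:implicaT_n} (with the opposite inclusion) to the kernel $\eta_{\hat\varepsilon_n}(x-y)$ with $\hat\varepsilon_n:=\varepsilon_n+\frac{2}{b}\|Id-T_n\|_{L^\infty}$, first for $\eta$ of the step form $c\mathbf 1_{[0,b)}$ and then for general $\eta$ by approximation from above with finite sums of steps and (K3). After this reduction, the points $T_nx,T_ny$ differ from $x,y$ by $o(\varepsilon_n)$, so with $y=x+\varepsilon_n\xi$ the integrand becomes $\varepsilon_n^2\,|Eu(x)\xi\cdot\xi|+o(\varepsilon_n^2)$ uniformly. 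Dividing by $\varepsilon_n^2$, changing variables, and using the continuity of $\rho$ and dominated convergence (justified by (K3)), we obtain
\[
\limsup_{n\to\infty} GTV_n(u_n)\le \int_D\rho(x)^2\int_{\R^d}\eta(\xi)|Eu(x)\xi\cdot\xi|\,\de\xi\,\de x=TV_\eta(u;\rho^2).
\]
Unlike the liminf, no second-order control of $T_n$ is needed here.

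\textbf{Step 2: density.} For $u\in BD(D)$ with $D$ Lipschitz, there exist smooth $u^k\in C^\infty(\overline D;\R^d)$ with $u^k\to u$ in $L^1$ and $|Eu^k|(D)\to|Eu|(D)$ (strict convergence), obtained by extension across $\partial D$ and mollification. By Reshetnyak continuity, since $\rho^2$ is continuous and bounded and $\phi_\eta$ is a continuous one-homogeneous function (a norm on $\mathbb M^d_{sym}$), we get $TV_\eta(u^k;\rho^2)\to TV_\eta(u;\rho^2)$. This step is the main obstacle. The difficulty is that $\rho$ is only continuous in the interior, so we need strict convergence up to the boundary, which is ensured by the Lipschitz regularity of $D$ and the fact that $|Eu|(\partial D)=0$ for the measure on $D$. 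One must also check that $\phi_\eta$ is finite and continuous, which follows from (K3). A cleaner route is to show directly $\Gamma$-$\limsup$ lower semicontinuity: the $\Gamma$-limsup functional is $TL^1$-lower semicontinuous, so it suffices to bound it on the dense class of Step 1.

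\textbf{Step 3: diagonal argument.} Since the $\Gamma$-limsup is lower semicontinuous with respect to $TL^1$ convergence along the fixed sequence $\nu_n$, we combine Steps 1 and 2 and pick $k(n)\to\infty$ slowly so that $u_n:=u^{k(n)}|_{\{X_i\}}$ satisfies both $(\nu_n,u_n)\to(\nu,u)$ in $TL^1$ and \eqref{e:limsup-ineq}.
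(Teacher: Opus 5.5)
Your proposal is correct and follows essentially the paper's route. You use strict approximation in $BD(D)$ by smooth Lipschitz maps together with Reshetnyak continuity, a reduction to step kernels (with truncation and (K3) for the tail), replacement of $\eta_{\varepsilon_n}(T_n(x)-T_n(y))$ by a kernel in $x-y$, and the recovery sequence $u_n=u|_{\{X_1,\dots,X_n\}}$, using only $\|Id-T_n\|_{L^\infty}=o(\varepsilon_n)$.

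The differences are minor. You expand $(u(T_nx)-u(T_ny))\cdot(T_nx-T_ny)$ by a second-order Taylor formula in $\R^d$, where the paper slices and uses the fundamental theorem of calculus. You also take the enlarged scale $\hat\varepsilon_n=\varepsilon_n+\frac{2}{b}\|Id-T_n\|_{L^\infty}$, which is the correct choice for an upper bound; the paper's minus sign at that point is a slip.
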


\begin{proof}[Proof of Theorem \ref{thm:limsup}]
    Without loss of generality we can assume that 
    \[
    TV_\eta(u;\rho^2) < \infty\,.
    \]
    Hence, by $(\rho2)$ we have that $u \in BD(D)$. Thus, we can approximate $u$ with a sequence $u_k \in C^\infty(D;\R^d) \cap \textnormal{Lip}(D;\R^d)$ such that $u_k \to u$ in $L^1(D;\R^d)$, $Eu_k \stackrel{*}{\rightharpoonup} Eu$, and $|Eu_k|(D)\to |Eu|(D)$. By Reshetnyak continuity theorem (see, e.g.,~\cite[Theorem~2.39]{AFP}) we have $TV_\eta(u_k;\rho^2) \to TV_\eta(u;\rho^2)$. Therefore it is enough to prove the limsup inequality for $u_k$. To simplify the notation we drop the dependence on $k$.

    % By Remark \ref{rmk:etalimsup} we can further assume that $\eta$ is of the form $\eta(t)=\alpha$ for $t <b$ and $\eta(t)=0$ for $t\ge b$. 

     Arguing  as in \cite[Section~5]{Trillos-Slepcev}, we may assume that the kernel $\eta$ is of the form $\eta(t)=c$ for $t<b$ and $\eta(t)=0$ for $t\ge b$. Indeed, if we can establish the limsup inequality under this assumption, then, by the subadditivity of the limsup, the same inequality also holds for functions of the form $\eta=\sum_{k=1}^l \eta_k$, for some $l\in\mathbb{N}$, where each $\eta_k$ is of the above form. Then, we can extend to the case $\eta$ compactly supported by approximation by a sequence of piecewise constant functions $\eta_n\colon [0,+\infty)\to [0,+\infty)$ such that $\eta_n \searrow \eta$ almost everywhere. Finally, to prove the limsup inequality in the general case of $\eta$ with possibly unbounded support, we consider $\eta_\alpha \colon [0,+\infty)\to [0,+\infty)$ defined by $\eta_\alpha(t):=\eta(t)$ for $t \le \alpha$ and $\eta_\alpha(t):=0$ for $t>\alpha$. Then the energy can be rewritten as
    \begin{align*}
        & GTV_{n}(u)= \ GTV_n^\alpha(u)\\
        & \qquad + \frac{1}{\varepsilon_n^2} \int_{{}_{\scriptstyle \{|T_n(x)-T_n(y)|>\alpha \varepsilon_n \}} } \!\!\!\!\!\!\!\!\!\!\!\!\!\!\!\!\!\!\!\!\!\!\!\!\!\!\!\!\!\!\!\!\!\!\!\!\!\!\ \eta_{\varepsilon_n}(T_n(x)-T_n(y))  |(u \circ T_n(x)-u \circ T_n(y))\cdot (T_n(x)-T_n(y))| \rho(x) \rho (y) \,\de x \de y,
    \end{align*}
    where $GTV^\alpha_n$ denotes the energy with $\eta_\alpha$ in place of $\eta$. Since it is enough to prove the limsup estimate for Lipschitz functions, proceeding as in \cite[Proof of Theorem~1.1, Step~4]{Trillos-Slepcev} one can show that the error $GTV_{n}(u)-GTV_n^\alpha(u)$ tends to zero. Hence, it suffices to reduce to the case in which $\eta$ is compactly supported.

    Set $\tilde \varepsilon_n:=\varepsilon_n-   \frac{2}{b}\|Id-T_n\|_{L^\infty}$. By the assumption (K2) we infer
    \[
    \eta \Big ( \frac{|T_n(x)-T_n(y)|}{\varepsilon_n}\Big ) \le \eta \Big ( \frac{|x-y|}{\tilde \varepsilon_n} \Big ),
    \]
    for almost every $(x,y) \in D \times D$.
    Thus we have the upper bound
    \begin{align*}
        &GTV_n(u) \\
        &\le \left (\frac{\tilde \varepsilon_n}{\varepsilon_n} \right)^{d+2} \frac{1}{\tilde \varepsilon_n^2}\iint_{D \times D} \frac{1}{\tilde\varepsilon_n^d}\eta \left (\frac{|x-y|}{\tilde \varepsilon_n} \right ) |(u\circ T_n(x)-u\circ T_n(y))\cdot (T_n(x)-T_n(y))|\rho(x) \rho(y) \di x \di y \\
        &=\left (\frac{\tilde \varepsilon_n}{\varepsilon_n} \right)^{d+2} \frac{1}{\tilde \varepsilon_n^2}\iint_{D \times D} \eta_{\tilde \varepsilon_n} (x-y  ) |(u\circ T_n(x)-u\circ T_n(y))\cdot (T_n(x)-T_n(y))|\rho(x) \rho(y)\di x \di y .
    \end{align*}
    Since $\frac{\tilde \varepsilon_n}{\varepsilon_n} \to 1$, it suffices to prove the limsup inequality for
    \[
    \frac{1}{ \varepsilon_n^2}\iint_{D \times D} \eta_{ \varepsilon_n}  (x-y ) |(u\circ T_n(x)-u\circ T_n(y))\cdot (T_n(x)-T_n(y))|\rho(x) \rho(y)\, \de x \de y
    \]
    for every $u \in C^\infty(D;\R^d) \cap \textnormal{Lip}(D;\R^d)$,
    where, for simplicity of notation, we have dropped the tilde.
    By slicing, we can rewrite the energy as
    \[
     \int_{\frac{D-D}{\varepsilon_n}} \Big (\int_{\Pi^\xi} F_n^{\xi,y}(  (u \circ T_n)^{\xi, y}  , \rho, (D \cap (D-\varepsilon_n \xi))_y^\xi) \, \de \mathcal{H}^{d-1}(y)  \Big )|\xi|\eta(\xi)\de\xi,
    \]
    where we recall that
    \begin{align*}
    F_n^{\xi,y} & (v ,\rho,(D \cap (D-\varepsilon_n \xi))_y^\xi)\\
    &= \frac{1}{\varepsilon_n^2} \int_{(D \cap (D-\varepsilon_n \xi))_y^\xi} |  v (t+\varepsilon_n ) -  v  (t) )\cdot (T_n^{\xi,y}(t+\varepsilon_n)-T_n^{\xi,t}(t))|\rho^{\xi,y}(t+\varepsilon_n)\rho^{\xi,y}(t) \, \de t.
    \end{align*}
 
    Set $ I_{n}  :=(D \cap (D-\varepsilon_n \xi))_y^\xi$. \ We claim that for a.e.~$\xi \in \R^{d} \setminus\{0\}$ and $\mathcal{H}^{d-1}$-a.e.~$y \in \Pi^{\xi}$ it holds
     \begin{align}
     \label{e:claimm}
    \lim_{n \to \infty} & \, F_n^{\xi,y}(  (u \circ T_n)^{\xi, y} ,\rho,  I_{n}  ) \\
    &
    = \lim_{n \to \infty} \frac{1}{\varepsilon_n}  \int_{ I_{n}  }|(u^{\xi,y}(t+\varepsilon_n)-u^{\xi,y}(t))\cdot \xi|\rho^{\xi,y}(t+\varepsilon_n)\rho^{\xi,y}(t) \, \de t\,. \nonumber
    \end{align}
    To prove~\eqref{e:claimm}, we prove that the following error term tends to zero:  
    \begin{align}
    \label{e:verylong}
        &\frac{1}{\varepsilon_n^2} \bigg | \int_{I_{n}}\Big (|(u^{\xi,y}(t+\varepsilon_n)-u^{\xi,y}(t))\cdot \varepsilon_n\xi| 
        \\
        &
        \quad- |((u\circ T_n)^{\xi,y}(t+\varepsilon_n )-(u\circ T_n)^{\xi,y}(t))\cdot (T_n^{\xi,y}(t+\varepsilon_n)-T_n^{\xi,y}(t))| \Big) \di t\bigg| \nonumber
        \\
        &
        \le \frac{1}{\varepsilon_n^2} \bigg | \int_{I_{n}}\Big (|(u^{\xi,y}(t+\varepsilon_n )-u^{\xi,y}(t))\cdot (\varepsilon_n\xi-(T_n^{\xi,y}(t+\varepsilon_n ) -T_n^{\xi,y}(t))| \nonumber
         \\
        &
        \quad+|(u^{\xi,y}(t+\varepsilon_n)-u^{\xi,y}(t))\cdot (T_n^{\xi,y}(t+\varepsilon_n) -T_n^{\xi,y}(t))| \nonumber
        \\
        &
        \quad- |((u\circ T_n)^{\xi,y}(t+\varepsilon_n)-(u\circ T_n)^{\xi,y}(t))\cdot (T_n^{\xi,y}(t+\varepsilon_n)-T_n^{\xi,y}(t))| \Big)\di t \bigg | \nonumber
        \\
        &
        \le \frac{1}{\varepsilon_n^2} \bigg | \int_{I_{n}}\Big (|(u^{\xi,y}(t+\varepsilon_n)-u^{\xi,y}(t))\cdot (\varepsilon_n\xi-(T_n^{\xi,y}(t+\varepsilon_n) -T_n^{\xi,y}(t))|  \nonumber
        \\
        &
        \quad +|(u^{\xi,y}(\varepsilon_n +t)-u^{\xi,y}(t)-((u \circ T_n)^{\xi,y}(t+\varepsilon_n)-(u\circ T_n)^{\xi,y}(t)))\cdot (T_n^{\xi,y}(t+\varepsilon_n) -T_n^{\xi,y}(t))| \Big) \di t \bigg |. \nonumber
        % &\le \frac{1}{\varepsilon_n^2} \bigg | \int_{I}\Big (|(u^{\xi,y}(t+\varepsilon_n)-u^{\xi,y}(t))\cdot (\varepsilon_n\xi-(T_n^{\xi,y}(t+\varepsilon_n) -T_n^{\xi,y}(t))| \\
        % &+|(u^{\xi,y}(\varepsilon_n +t)-u^{\xi,y}(t)-((u \circ T_n)^{\xi,y}(t+\varepsilon_n)-(u\circ T_n)^{\xi,y}(t)))\cdot (T_n^{\xi,y}(t+\varepsilon_n) -(y+(t+\varepsilon_n)\xi)|\\
        % &+|(u^{\xi,y}(\varepsilon_n +t)-u^{\xi,y}(t)-((u \circ T_n)^{\xi,y}(t+\varepsilon_n)-(u\circ T_n)^{\xi,y}(t)))\cdot (y+(t+\varepsilon_n)\xi-(y+t \xi))|\\
        % &+|(u^{\xi,y}(\varepsilon_n +t)-u^{\xi,y}(t)-((u \circ T_n)^{\xi,y}(t+\varepsilon_n)-(u\circ T_n)^{\xi,y}(t)))\cdot (T_n^{\xi,y}(t) -(y+t\xi)|\Big)\rho^{\xi,y}(t+\varepsilon_n)\rho^{\xi,y}(t)\bigg |
    \end{align}
    Since $u \in \text{Lip}(D;\R^d)$, we can bound the first term on the right-hand side of~\eqref{e:verylong} by
    \[
    \frac{|I_{n} |}{\varepsilon_n^2}  \text{Lip}(u)|\varepsilon_n\xi| \|Id-T_n\|_{L^\infty}\,,
    \]
    where $\text{Lip}(u)$ denotes the Lipschitz constant of~$u$.  After integration over $\frac{D-D}{\varepsilon_n}$ and  $\Pi^\xi$, this quantity converges to zero as $n \to \infty$ by \eqref{eq:id-tntozero}.
    To bound the second term on the right-hand side of~\eqref{e:verylong}, we observe that
\begin{align*}
    &|(u^{\xi,y}(\varepsilon_n +t)-u^{\xi,y}(t)-((u \circ T_n)^{\xi,y}(t+\varepsilon_n)-(u\circ T_n)^{\xi,y}(t)))\cdot (T_n^{\xi,y}(t+\varepsilon_n) -T_n^{\xi,y}(t))| \\
        &\le|(u^{\xi,y}(\varepsilon_n +t)-u^{\xi,y}(t)-((u \circ T_n)^{\xi,y}(t+\varepsilon_n)-(u\circ T_n)^{\xi,y}(t)))\cdot (T_n^{\xi,y}(t+\varepsilon_n) -y-(t+\varepsilon_n)\xi)|\\
        &+|(u^{\xi,y}(\varepsilon_n +t)-u^{\xi,y}(t)-((u \circ T_n)^{\xi,y}(t+\varepsilon_n)-(u\circ T_n)^{\xi,y}(t)))\cdot (y+(t+\varepsilon_n)\xi-(y+t \xi))|\\
        &+|(u^{\xi,y}(\varepsilon_n +t)-u^{\xi,y}(t)-((u \circ T_n)^{\xi,y}(t+\varepsilon_n)-(u\circ T_n)^{\xi,y}(t)))\cdot (T_n^{\xi,y}(t) -(y+t\xi))|.
\end{align*}
    Arguing as above,  each of these contributions tends to zero after integration over $\frac{D-D}{\varepsilon_n}$, $\Pi^\xi$, and $I_{n}$ in view of~\eqref{eq:id-tntozero} and of the Lipschitz continuity of~$u$.  Thus, we have shown that
    \begin{align}
    \label{e:claimm2}
    & \lim_{n \to \infty} \frac{1}{\varepsilon_n^2}   \int_{\frac{D - D}{\varepsilon_{n}}} \int_{\Pi^{\xi}} \bigg | \int_{I_{n}}\Big (|(u^{\xi,y}(t+\varepsilon_n)-u^{\xi,y}(t))\cdot \varepsilon_n\xi| 
        \\
        &
        - |((u\circ T_n)^{\xi,y}(t+\varepsilon_n )-(u\circ T_n)^{\xi,y}(t))\cdot (T_n^{\xi,y}(t+\varepsilon_n)-T_n^{\xi,t}(t))| \Big)\di t \bigg | \, \de \mathcal{H}^{d-1} (y) \, \eta(\xi) \di \xi = 0\,. \nonumber
    \end{align}
    Since $\rho$ is uniformly bounded from above and below (cf.~$(\rho2)$), \eqref{e:claimm2} implies~\eqref{e:claimm} for a.e.~$\xi \in \R^{d} \setminus\{0\}$ and $\mathcal{H}^{d-1}$-a.e.~$y \in \Pi^{\xi}$. 
    %In the following, to simplify the notation, we omit the factor $\rho^{\xi,y}(\varepsilon_n+\cdot)\rho^{\xi,y}(\cdot)$, this is justified when proving the upper bound, since by assumption it is uniformly bounded from above.Hence, the claim is proved.

    By the fundamental theorem of calculus, we have
    \begin{align*}
    \frac{1}{\varepsilon_n}  \int_{ I_{n} } & |(u^{\xi,y}(t+\varepsilon_n)-u^{\xi,y}(t))\cdot \xi|\rho^{\xi,y}(t+\varepsilon_n)\rho^{\xi,y}(t)\, \de t\\
    &=\frac{1}{\varepsilon_n}  \int_{ I_{n} }\Big |\int_t^{t+\varepsilon_n}\nabla u^{\xi,y}(\tau)\de \tau\cdot \xi\Big|\rho^{\xi,y}(t+\varepsilon_n)\rho^{\xi,y}(t)\, \de t.
    \end{align*}
    Recalling that $u \in C^\infty(D;\R^d)$ and $\rho \in C(D)$, we have, for every $t \in J \Subset D_y^\xi$,
    \[
    \lim_{n \to \infty} \fint_t^{t+\varepsilon_n}|\nabla u^{\xi,y}(\tau)\de \tau\cdot \xi|\rho^{\xi,y}(t+\varepsilon_n)\rho^{\xi,y}(t) =|\nabla u^{\xi,y}(t)\cdot \xi|(\rho^{\xi,y}(t))^2.
    \]
    Since $(\rho2)$ holds and  
    \[
    \frac{1}{\varepsilon_n} \Big|\int_t^{t+\varepsilon_n}\nabla u^{\xi,y}(\tau) \de \tau\cdot \xi\Big| \le \text{Lip}(u) |\xi|^2,
    \]
    we can apply the dominated convergence theorem and infer
    \begin{align*}
        &\lim_{n \to \infty}\frac{1}{\varepsilon_n} \int_{\frac{D-D}{\varepsilon_n}} \Big (\int_{\Pi^\xi}\int_{ I_{n}  }|(u^{\xi,y}(t+\varepsilon_n)-u^{\xi,y}(t))\cdot \xi|\rho^{\xi,y}(t+\varepsilon_n)\rho^{\xi,y}(t) \,\de t \de y\Big ) |\xi| \eta(\xi) \de \xi\\
        &\le \lim_{n \to \infty} \int_{ \frac{D-D}{\varepsilon_n} } \Big (\int_{\Pi^\xi}\int_{ I_{n}  }\fint_t^{t+\varepsilon_n}|\nabla u^{\xi,y}(\tau)\de \tau\cdot \xi|\rho^{\xi,y}(t+\varepsilon_n)\rho^{\xi,y}(t) \,\de t \de y\Big ) |\xi| \eta(\xi) \de \xi\\
        &=\int_{\R^d} \Big (\int_{\Pi^\xi}\int_{\Omega_y^\xi}|\nabla u^{\xi,y}(t)\cdot \xi|\rho^{\xi,y}(t)^2 \,\de t \de y\Big ) |\xi| \eta(\xi) \de \xi=TV_\eta(u;\rho^2).
    \end{align*}
     This concludes the proof of the limsup inequality~\eqref{e:limsup-ineq}.
\end{proof}

\section*{Acknowledgements}
This research has been supported by the Austrian Science Fund (FWF) through grants 10.55776/F65, 10.55776/Y1292, 10.55776/P35359, by the OeAD-WTZ project CZ04/2019 (M\v{S}M\\T\v{C}R 8J19AT013), by the University of Naples Federico II through the FRA Project ``ReSinApas'', and by the INdAM-GNAMPA project ``Sistemi multi-agente e replicatore: derivazione particellare e ottimizzazione'' CUP E53C25002010001. S.A. is member of Gruppo Nazionale per l'Analisi Matematica, la Probabilità e le loro Applicazioni (GNAMPA) of the Istituto Nazionale di Alta Matematica (INdAM).

%\bibliographystyle{siam}
%\bibliography{bibliography.bib}

\begin{thebibliography}{10}

\bibitem{AlmiDavoliKubinTasso}
{\sc S.~Almi, E.~Davoli, A.~Kubin, and E.~Tasso}, {\em On {D}e {G}iorgi's
  conjecture of nonlocal approximations for free-discontinuity problems: The
  symmetric gradient case}, Preprint,  (2024).

\bibitem{Ambrosio-Coscia-DalMaso}
{\sc L.~Ambrosio, A.~Coscia, and G.~Dal~Maso}, {\em Fine properties of
  functions with bounded deformation}, Arch. Rational Mech. Anal., 139 (1997),
  pp.~201--238.

\bibitem{AFP}
{\sc L.~Ambrosio, N.~Fusco, and D.~Pallara}, {\em Functions of bounded
  variation and free discontinuity problems}, Oxford Mathematical Monographs,
  The Clarendon Press, Oxford University Press, New York, 2000.

\bibitem{BachBraidesZeppieri}
{\sc A.~Bach, A.~Braides, and C.~Zeppieri}, {\em Quantitative analysis of
  finite-difference approximations of free-discontinuity problems}, Interfaces
  Free Bound., 22 (2020), pp.~317--381.

\bibitem{Bourdinetal}
{\sc B.~Bourdin, G.~Francfort, and J.-J. Marigo}, {\em The variational approach
  to fracture}, J. Elasticity, 91 (2008), pp.~5--148.

\bibitem{Braides-Caroccia}
{\sc A.~Braides and M.~Caroccia}, {\em Asymptotic behavior of the {D}irichlet
  energy on {P}oisson point clouds}, J. Nonlinear Sci., 33 (2023), pp.~Paper
  No. 80, 57.

\bibitem{Braides-Piatnitski}
{\sc A.~Braides and A.~Piatnitski}, {\em Homogenization of ferromagnetic
  energies on {P}oisson random sets in the plane}, Arch. Ration. Mech. Anal., 243
  (2022), pp.~433--458.

\bibitem{Bressonetal}
{\sc X.~Bresson, T.~Laurent, D.~Uminsky, and J.~von Brecht}, {\em Multiclass
  total variation clustering}, Advances in Neural Information Processing
  Systems, 26 (2013).

\bibitem{Caroccia}
{\sc M.~Caroccia}, {\em A compactness theorem for functions on {P}oisson point
  clouds}, Nonlinear Anal., 231 (2023), pp.~Paper No. 113032, 18.

\bibitem{Caroccia-Chambolle-Slepcev}
{\sc M.~Caroccia, A.~Chambolle, and D.~Slep\v{c}ev}, {\em Mumford-{S}hah
  functionals on graphs and their asymptotics}, Nonlinearity, 33 (2020),
  pp.~3846--3888.

\bibitem{Crismale-Scilla-Solombrino}
{\sc V.~Crismale, G.~Scilla, and F.~Solombrino}, {\em A derivation of
  {G}riffith functionals from discrete finite-difference models}, Calc. Var.
  Partial Differential Equations, 59 (2020), pp.~Paper No. 193, 46.

\bibitem{Cristoferi-Thorpe}
{\sc R.~Cristoferi and M.~Thorpe}, {\em Large data limit for a phase transition
  model with the p-{L}aplacian on point clouds}, European J. Appl. Math., 31
  (2020), pp.~185--231.

\bibitem{GarciaSellesetal}
{\sc D.~Garcia-Sell\'{e}s, E.~Carola, \`{O}.~Gratac\'{o}s, P.~Cabello, J.~Mu\~{n}oz, and
  O.~Ferrer}, {\em Geometrical characterization of fracture systems using point
  clouds and sefl-software: example of the añisclo anticline carbonate
  platform, central pyrenees}, Geologica acta, 22 (2024).

\bibitem{Trillos2020}
{\sc N.~Garc\'{\i}a~Trillos}, {\em Variational limits of {$k-NN$} graph-based
  functionals on data clouds}, SIAM J. Math. Data Sci., 1 (2019), pp.~93--120.

\bibitem{Trillos-Slepcev}
{\sc N.~Garc\'{\i}a~Trillos and D.~Slep\v{c}ev}, {\em Continuum limit of total
  variation on point clouds}, Arch. Ration. Mech. Anal., 220 (2016),
  pp.~193--241.

\bibitem{GTS}
{\sc N.~García~Trillos and D.~Slepčev}, {\em On the rate of convergence of
  empirical measures in {$\infty$}-transportation distance}, Canadian Journal
  of Mathematics, 67 (2015), p.~1358–1383.

\bibitem{Gobbino}
{\sc M.~Gobbino}, {\em Finite difference approximation of the mumford-shah
  functional}, Communications on Pure and Applied Mathematics, 51 (1998),
  pp.~197--228.

\bibitem{Gobbino-Mora}
{\sc M.~Gobbino and M.~Mora}, {\em Finite-difference approximation of
  free-discontinuity problems}, Proc. Roy. Soc. Edinburgh Sect. A, 131 (2001),
  pp.~567--595.

\bibitem{Rangapuram}
{\sc S.~Rangapuram and M.~Hein}, {\em Constrained 1-spectral clustering},
  Proceedings of the Fifteenth International Conference on Artificial
  Intelligence and Statistics, 22 (2012), pp.~1143--–1151.

\bibitem{Shi}
{\sc J.~Shi and J.~Malik}, {\em Normalized cuts and image segmentation},
  Pattern Analysis and Machine Intelligence, IEEE Transactions, 22 (2000),
  pp.~888–--905.

\bibitem{Slepcev-Thorpe}
{\sc D.~Slep\v{c}ev and M.~Thorpe}, {\em Analysis of {$p$}-{L}aplacian
  regularization in semisupervised learning}, SIAM J. Math. Anal., 51 (2019),
  pp.~2085--2120.

\bibitem{Sunetal}
{\sc J.~Sun, S.~Zhu, J.~Sun, J.~Zhou, Y.~Yao, Y.~Wang, J.~Zhang, B.~Zhou, and
  X.~Wang}, {\em A robust deep learning approach for rock discontinuity
  identification from large scale 3d point clouds}, Scientific Reports, 16
  (2025), pp.~637--650.

\bibitem{Thorpe-Theil}
{\sc M.~Thorpe and F.~Theil}, {\em Asymptotic analysis of the {G}inzburg-{L}andau
  functional on point clouds}, Proc. Roy. Soc. Edinburgh Sect. A, 149 (2019),
  pp.~387--427.

\end{thebibliography}

\end{document}